\newtheorem{theorem}{Theorem}[section]
\newtheorem{lemma}{Lemma}[section]
\newtheorem{definition}{Definition}[section]
\newtheorem{remark}{Remark}[section]
\newcommand{\be}{\begin{equation}}
\newcommand{\ee}{\end{equation}}
\newcommand{\del}{\partial}
\newcommand{\e}{\varepsilon}
\newcommand{\eps}{\varepsilon}
\newcommand{\To}{{\mathbb{T}}}
\def\R{\mathbb{R}}
\definecolor{listinggray}{gray}{0.9}
\definecolor{lbcolor}{rgb}{0.9,0.9,0.9}
\def\del{\partial}
\def\bu{\bar u}
\def\Z{\mathbb Z}
\begin{document}
	
	\numberwithin{equation}{section}
	
	\title[Relative Entropy Method for Inhomogeneous Systems of Balance Laws]{The Relative Entropy Method for Inhomogeneous Systems of Balance Laws}

	\author[C. Christoforou]{Cleopatra Christoforou}
	\address[Cleopatra Christoforou]{Department of Mathematics and Statistics,
		University of Cyprus, Nicosia 1678, Cyprus.}
	\email{christoforou.cleopatra@ucy.ac.cy} 
	
	\date{\today}
	
	\begin{abstract}
	General hyperbolic systems of balance laws with inhomogeneity in space and time in all constitutive functions are studied in the context of relative entropy. A framework is developed in this setting that contributes to a measure-valued weak vs strong uniqueness theorem, a stability theorem of viscous solutions and a convergence theorem as the viscosity parameter tends to zero. The main goal of this paper is to develop hypotheses under which the relative entropy framework can still be applied. Examples of systems with inhomogeneity that have different charateristics are presented and the hypotheses are discussed in the setting of each example.
	\end{abstract}
	
	\keywords{balance laws; inhomogeneity; relative entropy; measure-valued solution; uniqueness; $L^2$ stability; convergence}
	\subjclass[2010]{Primary: 35L65; 35A02; 35B35; Secondary: 35Q74; 35Q35; 35L45; 35K45}
	\maketitle 
	
\section{Introduction}\label{sec1}
	
	General systems of conservation laws in several space dimensions 
	\be\label{Sec1:Uorig-eq}
\del_t U+\del_\alpha(f_\alpha(U))=0
\ee
	is the subject of understanding wide-ranging phenomena that describe a broad class of partial differential equations in Continuum Physics. Here $U\in\mathbb{R}^n$ is a function of $(x,t)\in\mathbb{R}^d\times\mathbb{R}_+$ and $f_\alpha:\mathbb{R}^n\to\mathbb{R}^n$, $\alpha=1,\dots d$ are given smooth fluxes. The necessity to move from the particular to the general has been a tactic that produced many research results and gave answers to a class of problems simultaneously. In this spirit, it is the motivation of the work in this article. The aim is to capture the relative entropy method for systems that belong to the general class of inhomogeneous balance laws
	\be\label{Sec1:Ueq}
\del_t (A(U,x,t))+\del_\alpha(f_\alpha(U,x,t))+P(U,x,t)=0\;,
\ee
with initial data
\be\label{sec1:Udata}
U(x,0)=U_0(x)\;.
\ee
Here, the constitutive function $A:\mathbb{R}^n\times\mathbb{R}^d\times \mathbb{R}_+\to\mathbb{R}^n$ and $f_\alpha,\,P:\mathbb{R}^n\times\mathbb{R}^d\times \mathbb{R}_+\to\mathbb{R}^n$ that correspond to the fluxes and the source, respectively, depend explicitly on $(x,t)$ and the presence of this inhomogeneity is the main component investigated in this article in conjuction with the other challenges that systems of balance laws have. There are models in the literature that belong to this class~\eqref{Sec1:Ueq} of systems and the  explicit dependence is an important characteristic of the phenomena described. 
Such examples include the flow of a a gas through a duct of varying cross section, models with nonlocal terms corresponding to memory effect.
In general, explicit dependence of $A$ and $f_\alpha$ on time $t$ that indicates "ageing" of the medium is quite rare in Continuum Mechanics. However, it is quite more common to see the source $P$ to depend explicitly on time and this corresponds to a time-dependent forcing. On the other hand, examples coming from isometric immersions are characterized by such features and inhomogeneity (in both independent variables) is present even in the constitutive functions $A$ and $f_\alpha$.
	
	 The relative entropy method was introduced by Dafermos \cite{dafermos79,dafermos79b} and DiPerna \cite{diperna79} and it is a quite powerful technique in comparing solutions of one or more conservation laws. At the early stages of its development, the relative entropy method captured uniqueness and stability results in the 
hyperbolic context \eqref{Sec1:Uorig-eq} and in the sequel, the method has been flourished in various directions. For instance, it has been used not only for conservation laws  ({\it e.g.}~\cite{diperna79,bds11,dst12,SV14}), or balance laws ({\it e.g.}~\cite{tzavaras05,MT14}), but also hyperbolic-parabolic systems  ({\it e.g}~\cite{fn12,lt06,lt13,KV15}). By construction, it is evident its connection with thermodynamics and over the years, applications in other settings have been studied. The power of the method and the plethora of results on special systems motivated the work of Christoforou-Tzavaras~\cite{christoforou2016relative} that studied it in a general setting. More precisely, in~\cite{christoforou2016relative}, hyperbolic-parabolic systems written in the general form
\be\label{Sec1:UCT-eq}
\del_t (A(U))+\del_\alpha(f_\alpha(U))=\e\del_\alpha (B_{\alpha\beta}(U)\del_\alpha U)\;
\ee
are considered, with viscosity matrices $B_{\alpha\beta}$, $\alpha,\beta=1\dots d$, and hypotheses are assumed so that the relative entropy method is performed. Having the relative entropy identity, stability of viscous solutions with respect to initial data is established as well as convergence as $\e\to0+$ of viscous solutions to the smooth solution of the hyperbolic system. Also, weak-strong uniqueness in the hyperbolic regime is obtained within the class of dissipative measure-valued solutions. The aim in~\cite{christoforou2016relative} is to include systems, that are hyperbolic-parabolic with $A(U)$ not necessarily being equal to $U$, under the machinery of the relative entropy. Applications in thermoviscoelasticity are studied in this context.

	 The objective of this article is to systematize the derivation of relative entropy identities for the inhomogeneous systems~\eqref{Sec1:Ueq}. The idea is similar to the one employed in~\cite{christoforou2016relative}, but here, we allow the presence of inhomogeneity in space $x\in\mathbb{R}^d$ and time $t>0$ in the constitutive functions  of the hyperbolic systems. In other words, we assume that $A$, $f_\alpha$ and $P$ depend explicitly on $(x,t)$ and investigate the hypotheses needed to perform the relative entropy method for system~\eqref{Sec1:Ueq} while in~\cite{christoforou2016relative} dependence only on the state $U$ was considered. 

There is a lot of effort in the community of conservation laws to identify the appropriate weak framework for systems of conservation laws  in more than one space dimension in which global existence of weak solutions can be established. For the time being, the well-posedness for systems of several space dimensions is an unexplored area with significant potential. However, in one-space dimension, systems of conservation laws~\eqref{Sec1:Uorig-eq} are well studied and global existence of entropy weak solutions is established for small initial data of bounded variation. Inhomogeneous systems~\eqref{Sec1:Ueq} in one space dimension, $d=1$, have been studied under appropriate dissipativeness conditions on the source $P$ and compactness properties of the solution $U$ and its derivatives. These could be achieved under appropriate decay rates of the constitutive functions with respect to their dependence in space $x$ and time $t$. Techniques that have been developed in one-space dimension for systems~\eqref{Sec1:Ueq} to construct approximate solutions that converge (up to a subsequence) such as the random choice method, the front tracking algorithm, the vanishing viscosity and the compensated compactness method have been extended in the setting of~\eqref{Sec1:Ueq}. Dafermos and Hsiao~\cite{DH} first established global existence when $A(U,x,t)\equiv U$ under appropriate conditions on the derivates of the flux and the source on $(x,t)$ using random choice method. Other related results on global existence for inhomogeneous systems are~\cite{AG, C0}. An exposition of the current state of the theory can also be found in the book~\cite{MR3468916}  by Dafermos. In short, we study the relative entropy framework of~\eqref{Sec1:Ueq} by combining ideas from~\cite{christoforou2016relative} having in mind the existing results for inhomogeneous systems~\eqref{Sec1:Ueq} in one-space dimension. 

Further analysis is exploited in the setting of hyperbolic-parabolic systems with inhomogeneity
\be\label{Sec1:Ueqpar}
\del_t (A(U,x,t))+\del_\alpha(f_\alpha(U,x,t))+P(U,x,t)=\e \del_\alpha(B_{\alpha\beta}(U,x,t)\del_\beta U)
\ee
with $B_{\alpha\beta}:\mathbb{R}^n\times\mathbb{R}^d\times[0,\infty)\to\mathbb{R}^{n\times n}$ given smooth functions, $\alpha,\beta=1,\dots,d$.
The dependence on $(x,t)$ is also taken into account in the viscosity matrix $B$. The relative entropy framework is worked out to produce a relative entropy inequality under additional hypothesis on the viscosity matrices $B_{\alpha\beta}$ as studied in~\cite{christoforou2016relative}. Again, further conditions are imposed to control the dependence of $B_{\alpha\beta}$ on $(x,t)$.

The structure of the paper is the following: In Section~\ref{sec2}, we state the hypotheses under which the relative entropy method is performed. In Section~\ref{sec3}, we perform the computations and arrive at the relative entropy identity for~\eqref{Sec1:Ueq} and \eqref{Sec1:Ueqpar} in each subsection. In Section~\ref{sec4}, we state and prove the theorems: In Subsection~\ref{S4.1}, we exploit the definition of dissipative measure-valued solutions and prove the weak-strong uniqueness result for~\eqref{Sec1:Ueq}; in Subsection~\ref{S4.2}, we obtain the $L^2$ stability of viscous solutions to~\eqref{Sec1:Ueqpar} while in Subsection~\ref{S4.3}, we establish the convergence to a smooth solution of~\eqref{Sec1:Ueq} as $\e\to0+$. Section~\ref{sec6} includes examples that belong to the hyperbolic class of inhomogeneous systems~\eqref{Sec1:Ueq} and the hyperbolic-parabolic one~\eqref{Sec1:Ueqpar}. In a forthcoming paper of the author, an application of different flavor is presented; this is the isometric immersion problem into $\mathbb{R}^3$ and the presence of inhomogeneity there is crucial part of the issue. This was actually the motivation of the author that resulted to the present article.

\section{Hypotheses and Preliminaries} \label{sec2}

In this section, we consider a weak solution $U$ to the inhomogeneous system
\be\label{Sec3:Ueq}
\del_t (A(U,x,t))+\del_\alpha(f_\alpha(U,x,t))+P(U,x,t)=0
\ee
and a strong solution $\bar U$ to
\be\label{Sec3:barUeq}
\del_t (A(\bar U,x,t))+\del_\alpha(f_\alpha(\bar U,x,t))+P(\bar U,x,t)=0
\ee
and set appropriate hypotheses on the constitutive functions and $L^p$-type growth conditions, that allow us to derive the relative entropy identity between $U$ and $\bar U$ in the next section. Towards the end of this section, we include additional hypotheses needed to treat the hyperbolic-parabolic case.

To begin with, we give some useful notation.

\noindent
{\bf Notation.}\emph{
Let $\xi=\xi(U,x,t)$ be a generic function of $(U,x,t)$. Then $\xi_t$ and $\xi_{x_\alpha}$ denote the partial derivatives of $\xi(U,x,t)$ with respect to the components $t$ and $x_\alpha$ respectively. Also, $\nabla \xi:=\nabla_U\xi$ is the gradient of $\xi=\xi(U,x,t)$ with respect to  the vector $U$. For the case that $\psi$ is a function of $(x,t)$, $\psi=\psi(x,t)$, the partial derivatives of $\psi$ with respect to $x_\alpha$ and $t$ are denoted by $\partial_\alpha \psi$ and $\partial_t \psi$. Hence, for the case that $U$ is a function of $(x,t)$, i.e. $U=U(x,t)$,
we have immediately that the partial derivatives of  $\xi(U(x,t),x,t)$ w.r.t. $x_\alpha$ and $t$ satisfy the expressions
\be
\partial_{x_\alpha} \xi(U(x,t),x,t)= \nabla\xi(U(x,t),x,t)\, \partial_\alpha U(x,t)+\xi_{x_\alpha}(U(x,t),x,t)
\ee
For convenience, from here and on, we write $\xi=\xi(U,x,t)$ having in mind that $U=U(x,t)$.\\
We also adopt the standard summation notation throughout the paper and from here and on, we use the abbreviation $\xi=\xi(U,x,t)$ and $\bar \xi=\xi(\bar U,x,t)$ for convenience.
}

It should be mentioned that that in what follows, we consider either the whole space $\mathbb{R}^d\times[0,T]$ and assume that the solutions decay as $|x|\to\infty$ or the domain $Q_T=  \To^d\times[0,T]$ for periodic solutions with $\To^d =(\R/2\pi\Z)^d$ and $T>0$ a finite time of existence.  

Next, we state hypotheses on system~\eqref{Sec3:Ueq} and provide some useful remarks.  All hypotheses are divided into two classes, called Hypotheses {\bf A} and {\bf B}. The first class of five hypotheses are needed despite of the inhomogeneity and can be found also in~\cite{christoforou2016relative} but here are written with the presence of $(x,t)$ dependence. See also Gwiazda et al~\cite{GOA} for a further generalization of the hypotheses set in~\cite{christoforou2016relative}.
The second class of three hypotheses are implemented to treat the presence of dependence on $(x,t)$. There is another class, called Hypotheses {\bf C},  that consists of two hypotheses and they reflect the diffusion part when dealing with the hyperbolic-parabolic system~\eqref{Sec1:Ueqpar}. 

Here it is the first class of hypotheses:\\
\noindent
{\bf Hypotheses A.} We assume the following hypotheses:\\
{\bf ($\textbf{H}_1$)} At every point $(x,t)$, the map $A(\cdot,x,t):\mathbb{R}^n\to\mathbb{R}^n$ is a $C^2$ map, that satisfies that $\nabla A(U,x,t)$ is nonsingular $\forall$ $U\in\mathbb{R}^n$.\\
{\bf ($\textbf{H}_2$)} At every point $(x,t)$, there exist an entropy-entropy flux pair $(\eta(\cdot,x,t), q(\cdot,x,t))$, i.e. there exists  a smooth function $G(\cdot,x,t):\mathbb{R}^n\to\mathbb{R}^n$, $G(\cdot,x,t)=G(U,x,t)$, such that  
\begin{equation}\tag{$\text{H}_2$}\label{S2:H2}
\begin{aligned}
\nabla \eta(U,x,t)&=G(U,x,t)\cdot\nabla A(U,x,t)\\
\nabla q_\alpha(U,x,t)&=G(U,x,t)\cdot\nabla f_\alpha(U,x,t),\qquad\alpha=1,\dots,d\,.
\end{aligned}
\end{equation}
{\bf ($\textbf{H}_3$)} At every point $(x,t)$, the symmetric matrix $$\nabla^2 \eta(U,x,t)- G(U,x,t)\cdot\nabla^2A(U,x,t)$$ is strictly positive definite uniformly in $x$ and $t$, i.e. there exists a positive constant $\mu$ independent of $(x,t)$ such that 
\be\tag{$\text{H}_3$}\label{S2:H3}
\xi^T\left( \nabla^2 \eta(U,x,t)- G(U,x,t)\cdot\nabla^2A(U,x,t) \right)\xi\ge \mu |\xi|^2>0\qquad \forall\xi\in\mathbb{R}^n\setminus\{ 0\}.
\ee
{\bf ($\textbf{H}^{B}$)} We assume boundedness of the constitutive functions w.r.t. the inhomogeneity. In other words, when $U$ takes values in a bounded ball $B_M\subset\mathbb{R}^n$ centered at the origin with radius $M>0$, then
\be\tag{$\text{H}^{B}$}\label{S2:HB}
\begin{aligned}
&|A(U,x,t)|+ |\nabla A(U,x,t)|+|f_\alpha (U,x,t)|+|\nabla f_\alpha (U,x,t)|\le C,\\
&|G(U,x,t)|+|\nabla G(U,x,t)|\le C
\end{aligned}
\ee
$\forall U\in B_M, \,x\in\mathbb{R}^d,\, t>0$, for some constant $C$ possibly depending on $M$.\\
{\bf ($\textbf{H}^{gr}$)} There are positive constants $\beta_1$, $\beta_2$, $\beta_3$ such that
\be\tag{$\text{H}^{gr}_1$}\label{S2:Hgr1}
\beta_1(|U|^p+1)-\beta_3\le\eta(U,x,t)\le\beta_2(|U|^p+1),\qquad\forall U\in\mathbb{R}^n, \,x\in\mathbb{R}^d,\, t>0
\ee
and for every $(x,t)$, it holds
\be\tag{$\text{H}^{gr}_2$}\label{S2:Hgr2}
\frac{|f_\alpha(U,x,t)|}{\eta(U,x,t)}=o(1) \qquad\text{as }|U|\to\infty,
\ee
\be\tag{$\text{H}^{gr}_3$}\label{S2:Hgr3}
\frac{|A(U,x,t)|}{\eta(U,x,t)}=o(1) \qquad\text{as }|U|\to\infty\,.
\ee

\begin{remark}
By hypothesis { $(\text{H}_1)$}, for every $(x,t)$,  the map $U\mapsto V=A(U,x,t)$ is globally invertible with the inverse map $U=A^{-1}(V,x,t)$ to be a $C^2$ map. Moreover, 
one can verify that hypothesis \eqref{S2:H2} holds true if the multiplier $G$ satisfies simultaneously the equations
\begin{align}
\nabla G(U,x,t)^T\nabla A(U,x,t)&=\nabla A(U,x,t)^T\nabla G(U,x,t)\\
\nabla f_\alpha(U,x,t)^T\nabla A(U,x,t)&=\nabla f_\alpha(U,x,t)^T\nabla G(U,x,t)\qquad\alpha=1,\dots,d,
\end{align}
for every $x$ and $t$. 
\end{remark}

By hypotheses {$(\text{H}_{1})$}--\eqref{S2:H2}, we have  $G=G(U,x,t)=\nabla\eta\cdot(\nabla A)^{-1}$. Hence, using the relations,
$$\del_t\eta(U,x,t)=\nabla \eta \partial_t U+\eta_t, $$
$$\del_\alpha q_\alpha(U,x,t)=\nabla q_\alpha \del_\alpha U+q_{\alpha,x_\alpha}, $$
and multiplying~\eqref{Sec3:Ueq} by $G$, we arrive at the entropy relation
\be\label{Sec3:etaeq}
\del_t(\eta(U,x,t))+\del_\alpha q_\alpha(U,x,t) +G\cdot R\le \eta_t+q_{\alpha,x_\alpha}
\ee
where $R:=P+A_t+f_{\alpha,x_\alpha}$ and $Z:=G\cdot R-\eta_t-q_{\alpha,x_\alpha}$, that is an inequality that holds true in the distribution sense for the class of weak solutions.

Following the analysis in~\cite{christoforou2016relative}, we immediately see that system~\eqref{Sec3:Ueq} can be expressed as
\begin{align}
\label{sys2}
\del_t V + \del_\alpha \tilde{f_\alpha} (V,x,t) + \tilde{P}(V,x,t)& = 0 
\end{align}
written in the conserved variable $V = A(U,x,t)$ and in this form, it attains a convex entropy $\tilde{\eta}(V,x,t)$ under hypothesis \eqref{S2:H3} that satisfies
\begin{align}
\del_t \tilde{\eta}  (V,x,t)  + \del_\alpha  \tilde{q_\alpha} (V,x,t)   +\tilde{Z}(V,x,t)= 0\;.
\label{ensys2}
\end{align}
Here, $\tilde{g}(V,x,t)$ associated with a generic function $g(\cdot,x,t)$ is given via the relation $$g(U,x,t)=g(A^{-1}(V,x,t),x,t):=\tilde{g}(V,x,t),$$
where $A^{-1}(\cdot,x,t)$ denotes the inverse of the map $A(\cdot,x,t)$ according to hypothesis {$(\text{H}_{1})$}.
Indeed, we have that the multiplier is
\begin{align}
\label{appform3}
G(U,x,t) =  ( \nabla_V \tilde{\eta} ) (A(U,x,t),x,t)  
\end{align}
and the identity
\begin{equation}\label{appform2}
\begin{aligned}
\nabla^2_U \eta (U,x,t) - (\nabla_V \tilde{\eta} )& (A(U,x,t),x,t) \cdot \nabla^2_U A(U,x,t)=  \\
&=(\nabla_V^2 \tilde{\eta})(A(U,x,t),x,t) : ( \nabla_U A(U,x,t) , \nabla_U A(U,x,t) )  \;.
\end{aligned}
\end{equation}
implies that hypothesis \eqref{S2:H3}  translates 
to the requirement  that the entropy  $\tilde{\eta}(V,x,t)$ is convex in $V$, i.e.
$$
\zeta \cdot \nabla_V^2 \tilde{\eta} (V,x,t)  \zeta > 0  \quad \mbox{for $\zeta \in \R^n, \; \zeta \ne 0$}  \, ,
$$
uniformly in $x$ and $t$.

Now, we define the relative entropy quantity $\eta(U|\bar U; x,t)$ between the weak solution $U$ and the strong solution $\bar U$ to be
\be\label{Sec3:relative eta}
\eta(U|\bar U; x,t):=\eta(U,x,t)-\eta(\bar U,x,t)-G(\bar U,x,t)\left( A(U,x,t)-A(\bar U,x,t)\right)
\ee
and the corresponding relative entropy fluxes
\be\label{Sec3:relative q}
q_\alpha(U|\bar U; x,t):=q_\alpha(U,x,t)-q_\alpha(\bar U,x,t)-G(\bar U,x,t)\left( f_\alpha(U,x,t)-f_\alpha(\bar U,x,t)\right)
\ee
for $\alpha=1,\dots, d$. For simplification, we use the abbreviation $\xi=\xi(U,x,t)$ and $\bar \xi=\xi(\bar U,x,t)$ as described in the paragraph presenting notation. 
We also need the relative multiplier
\be\label{Sec3:relative G}
G(U|\bar U; x,t):=G(U,x,t)-\bar G-\nabla \bar G (\nabla \bar A)^{-1} \left( A(U,x,t)-\bar A\right)\;,
\ee
and the relative fluxes
\be\label{Sec3:relative f}
f_\alpha(U|\bar U;x,t):=f_\alpha(U,x,t)-\bar f_\alpha-\nabla \bar f_\alpha(\nabla \bar A)^{-1} \left( A(U,x,t)-\bar A \right)\;.
\ee
Let us also add that the relative quantities defined above coincide with those in~\cite{christoforou2016relative}, but here they also depend explicitly on $(x,t)$ due to inhomogeneity.

\noindent
{\bf Hypotheses B.} We continue now with the set of hypotheses due to the inhomogeneity.\\
{\bf ($\text{H}^{x,t}$)} If $U$ and $\bar U$ take values in a bounded ball $B_M\subset\mathbb{R}^n$ centered at the origin with radius $M>0$, then there exists a constant $C$ possibly depending on $M$ such that
\begin{align}\label{S2:Hxt}
&|A(U,x,t)-\bar A| + |f_\alpha(U,x,t)-\bar f_\alpha| \,\le C|U-\bar U|^2  \tag{$\text{H}^{x,t}_1$}\\
&|A_t(U,x,t)-\bar A_t+f_{\alpha,x_\alpha}(U,x,t)-\bar f_{\alpha,x_\alpha}| \le C|U-\bar U|^2  \tag{$\text{H}^{x,t}_2$}\label{S2:Hxt2}\\
&|\eta_t(U,x,t)-\bar \eta_t+q_{\alpha,x_\alpha}-\bar q_{\alpha,x_\alpha}|
\le C|U-\bar U|^2  \tag{$\text{H}^{x,t}_3$}\label{S2:Hxt3}
\end{align}
for all $x\in\mathbb{R}^d$, $t>0$.\\
{\bf ($\textbf{H}_{x,t}^B$)} We assume boundedness of the constitutive functions w.r.t. the inhomogeneity. In other words, when $U$ takes values in a bounded ball $B_M\subset\mathbb{R}^n$ centered at the origin with radius $M>0$, then
\be\tag{$\text{H}^{B}_{x,t}$}\label{S2:HBxt}
\begin{aligned}
&|R(U,x,t)|+ |G_t(U,x,t)|+|G_{x_\alpha} (U,x,t)|\le C,
\end{aligned}
\ee
$\forall U\in B_M, \,x\in\mathbb{R}^d,\, t>0$, for some constant $C$ possibly depending on $M$.\\
{\bf ($\textbf{H}^{gr}_{x,t}$)} We assume growth conditions on the constitutive functions that are present due to inhomogeneity. More precisely, for every $(x,t)$, it holds
\be\tag{$\text{H}^{gr}_4$}\label{S2:Hgr4}
\frac{|\eta_t(U,x,t)+q_{\alpha,x_\alpha}(U,x,t)|}{\eta(U,x,t)}=o(1) \qquad\text{as }|U|\to\infty,
\ee
\be\tag{$\text{H}^{gr}_5$}\label{S2:Hgr5}
\frac{|A_t(U,x,t) + f_{\alpha,x_\alpha}(U,x,t) |}{\eta(U,x,t)}=o(1) \qquad\text{as }|U|\to\infty\,.
\ee
{\bf ($\textbf{H}^{R}$)} Either \be\tag{$\text{H}^{R}_{1}$}\label{S2:HR1}
\frac{|G(U,x,t) \cdot R(U,x,t)|}{\eta(U,x,t)}=o(1),\qquad \frac{|R(U,x,t)|}{\eta(U,x,t)}=o(1)\quad\text{as } |U|\to\infty,
\ee
hold true, for every $x\in\mathbb{R}^d$, $t>0$\\
or
\be\tag{$\text{H}^{R}_{2}$}\label{S2:HR2}
(G(U,x,t)-\bar G)\cdot(R(U,x,t)-\bar R)\ge 0
\ee
for all $U,\bar U\in\mathbb{R}^n$, $x\in\mathbb{R}^d$, $t>0$.
%
Moroever, for every $(x,t)$ it holds
\be\tag{$\text{H}^{R}_{3}$}\label{S2:HG}
\frac{|G(U,x,t)|}{\eta(U,x,t)}=o(1) \qquad\text{as }|U|\to\infty\;.
\ee

\begin{remark} Let us place some remarks regarding Hypotheses B.\\
1. 
In the existing theory for $1$-d inhomogeneous systems, assumptions are set so that the bound for terms similar to those in~\eqref{S2:Hxt}--\eqref{S2:Hxt3} consists of a function $\psi(x,t)$ that belongs to $L^1(\mathbb{R}\times[0,\infty)$. The purpose of this is to achieve apriori bounds on the total variation and prove compactness of the approximate sequence constructed via the random choice method in conjuction with operator splitting (cf.~\cite{DH}). The $BV$ estimates are of $L^1$ type and therefore such a condition works well in that setting. Here, the aim is to apply the relative entropy method, which by construction is of $L^2$ type. This explains the need to have bounds of the form~\eqref{S2:Hxt}--\eqref{S2:Hxt3}. However, in both cases, the heart of the matter is that these terms are almost negligible as $|x|\to\infty$ or $t\to+\infty$ and as a result the inhomogeneity is absent there.\\
2. Assumptions~\eqref{S2:HR1}--~\eqref{S2:HG} are needed to control the terms
$$(G(U,x,t)-\bar G)\cdot(R(U,x,t)-\bar R),\qquad G(U|\bar U,x,t)$$
in the relative entropy identity. Condition~\eqref{S2:HR2} corresponds to a dissipative source term.
\end{remark}
%
\noindent
{\bf Hypotheses C.} We conclude this section with the additional hypotheses needed to treat systems that are hyperbolic-parabolic. Consider the inhomogeneous system with diffusion
\be\label{Sec2:Ueqpar}
\del_t (A(U,x,t))+\del_\alpha(f_\alpha(U,x,t))+P(U,x,t)=\e \del_\alpha(B_{\alpha\beta}(U,x,t)\del_\beta U)\;.
\ee Then the evolution of the entropy will obey the relation
\begin{equation}
\label{hypparaen}
\begin{aligned}
\del_t  \eta (U,x,t) + \del_\alpha q_\alpha (U,x,t) +G\,\cdot R &=  \eta_t+q_{\alpha,x_\alpha}+\eps \del_\alpha ( G(U,x,t) \cdot B_{\alpha \beta} (U,x,t) \del_\beta U) \\
&- \eps (\nabla G(U,x,t) \del_{\alpha} U+G_{x_\alpha}) \cdot B_{\alpha \beta} (U,x,t) \del_{\beta} U \, 
\end{aligned}
\end{equation}
with the viscosity $\e>0$ term present in the the diffusion terms. In Subsection~\ref{S4.2}, we prove stability of viscous solutions to~\eqref{Sec2:Ueqpar} and in Subsection~\ref{S4.3}, we show convergence as $\eps \to 0$ of viscous approximations to the solution of the hyperbolic system~\eqref{Sec3:Ueq}. To establish these two results, we assume part of Hypotheses A and B imposed on the hyperbolic part and in addition to that we add one of the two following hypotheses depending which result we exploit.

\noindent
{\bf ({H$_{P_1}$)}} 
Equation \eqref{hypparaen} admits a dissipative structure, namely that the following strictly positive definite structure holds true:
\begin{equation}
\sum_{\alpha,\beta=1}^d\xi_\alpha \cdot \left(\nabla G(U,x,t)^T B_{\alpha \beta} (U,x,t) \, \xi_\beta\right) =\sum_{\alpha,\beta=1}^d\sum_{i,j=1}^n\xi_\alpha^i D_{\alpha\beta}^{ij} \xi_\beta^j  > 0  
\end{equation}
for all  
$\xi=(\xi_1,\dots,\xi_d) \in \R^{d \times n} \, , \xi \neq 0$ uniformly in $x$ and $t$. Here $D_{\alpha\beta}\doteq\nabla G^T B_{\alpha\beta}  $. We can easily get that, for solutions $U$ that decay to zero as $|x|\to\infty$, there is a positive constant $\lambda_1$, possibly depending on the radius $M$ of a ball $B_M$ within which $U$ takes values, but independent of $x$ and $t$ such that 
\begin{equation}\label{S2:HP1}
\sum_{\alpha,\beta=1}^d\xi_\alpha \cdot \left(\nabla G(U,x,t)^T B_{\alpha \beta} (U,x,t) \, \xi_\beta\right) \ge \lambda_1|\xi|^2 \;
\tag {H$_{P_1}$}
\end{equation}
with $|\cdot|$ to be the Euclidean norm in $\mathbb{R}^{n\times d}$.
This condition guarantees that the entropy dissipates along the evolution. Also, it is a natural condition in the context of applications to mechanics as it is connected to the Clausius-Duhem inequality.

\noindent
 {\bf ({H$_{P_2}$)}} There is a positive constant $\lambda_2$ independent of $x$ and $t$ such that the viscosity matrices satisfy
\begin{equation}
\label{S2:HP2}
\sum_{\alpha,\beta}\nabla G(U,x,t)\del_\alpha u\cdot B_{\alpha\beta}(U,x,t)\del_\beta u\ge \lambda_2\sum_\alpha| \sum_\beta B_{\alpha\beta}(U,x,t)\del_\beta U|^2\;.
\tag {H$_{P_2}$}
\end{equation}
This condition allows degenerate viscosity matrices to be considered in the zero-viscosity limit.

Except of one of the two hypotheses above, we assume boundedness of the viscosity matrix w.r.t. the inhomogeneity; i.e.\\
\noindent
{\bf ($\textbf{H}^{B}_{{par}}$)}  If $U$ takes values in a bounded ball $B_M\subset\mathbb{R}^n$ centered at the origin with radius $M>0$, then
\be\tag{$\text{H}^{B}_{{par}}$}\label{S2:HBpar}
\begin{aligned}
&|B_{\alpha,\beta}(U,x,t)|+ |\nabla B_{\alpha,\beta}(U,x,t)|+|B_{\alpha,\beta,x_\alpha} (U,x,t)|\le C,
\end{aligned}
\ee
$\forall U\in B_M, \,x\in\mathbb{R}^d,\, t>0$, for some constant $C$ possibly depending on $M$. Also, the boundedness conditions~\eqref{S2:HB} and \eqref{S2:HBxt} hold true for higher derivatives of the constitutive functions with respect to $U$.

\begin{remark}
The hypotheses~\eqref{S2:HB}, \eqref{S2:HBxt} and \eqref{S2:HBpar} regarding the boundedness of the constitutive functions with respect to $x$ and $t$ are quite reasonable and they can be validated as part of the existence theory. For instance the presence of $\psi(x,t)\in L^1(\mathbb{R}\times[0,\infty)$ as an upper bound in various functions in~\cite{DH} yields such hypotheses.
\end{remark}

%
\section{The Relative Entropy Identity for Inhomogeneous Systems}\label{sec3}
 We derive the relative entropy inequality for hyperbolic systems~\eqref{Sec3:Ueq} and for hyperbolic-parabolic systems~\eqref{Sec2:Ueqpar}  under the hypotheses set in the previous section.

\subsection{Systems of Balance Laws}\label{sec3.1}

Let $U$ be a weak solution to~\eqref{Sec3:Ueq} that satisfies the entropy inequality~\eqref{Sec3:etaeq} in the distribution sense and $\bar U$ a strong solution to~\eqref{Sec3:barUeq}, hence it satisfies~\eqref{Sec3:etaeq} as an identity. Then we immediately get the inequality
\be\label{Sec3:rel step1}
\del_t(\eta(U,x,t)-\bar \eta)+\del_\alpha(q_\alpha(U,x,t)-\bar q_\alpha) +G\cdot R -\bar G\cdot \bar R \le \eta_t+q_{\alpha,x_\alpha}-\bar \eta_t-\bar q_{\alpha,x_\alpha}\;.
\ee
Also, we can compute
\begin{align}\label{Sec3:rel step2}
\partial_t&\left[\bar G\cdot(A-\bar A)\right]+\partial_\alpha\left[\bar G\cdot (f_\alpha-\bar f_\alpha)\right]=\nonumber\\
=& \nabla\bar G\,\partial_t\bar U\cdot (A-\bar A)+\bar G_t \cdot(A-\bar A)+\bar G\cdot\partial_t (A-\bar A)\nonumber\\
&+\nabla\bar G\,\partial_\alpha \bar U\cdot(f_\alpha-\bar f_\alpha)+\bar G_{x_\alpha} \cdot(f_\alpha-\bar f_\alpha)+\bar G\cdot\partial_\alpha(f_\alpha-\bar f_\alpha)\nonumber\\
=&\nabla\bar G\left[-\left((\nabla \bar A)^{-1}\nabla \bar f_\alpha\partial_\alpha\bar U+ (\nabla \bar A)^{-1} \bar R\right)\cdot (A-\bar A) +\partial_\alpha \bar U\cdot(f_\alpha-\bar f_\alpha)
\right]\nonumber\\
&+\bar G_t \cdot (A-\bar A)+\bar G_{x_\alpha} \cdot (f_\alpha-\bar f_\alpha)-\bar G\cdot(P-\bar P)\nonumber\\
=&\nabla\bar G\, \partial_\alpha \bar U\cdot  f_\alpha(U|\bar U; x,t) 
-\bar R\cdot \nabla\bar G(\nabla \bar A)^{-1} (A-\bar A)\nonumber\\
&+\bar G_t \cdot(A-\bar A)+\bar G_{x_\alpha}\cdot (f_\alpha-\bar f_\alpha)-\bar G\cdot(P-\bar P)\,,
\end{align}
using \eqref{Sec3:Ueq},~\eqref{Sec3:barUeq} and~\eqref{Sec3:relative f} and taking into account hypothesis~{$(\text{H}_1)$}.
Next, we can rewrite the terms 
\begin{align}\label{Sec3:rel step3}
G\cdot R-\bar G\cdot \bar R-\bar R\cdot \nabla\bar G(\nabla& \bar A)^{-1} (A-\bar A)-\bar G \cdot (P-\bar P)
=(G-\bar G) \cdot (R-\bar R) \nonumber\\
&+\bar R \cdot G(U|\bar U;x,t)+\bar G\cdot \left[ A_t-\bar A_t+f_{\alpha,x_\alpha-}\bar f_{\alpha,x_\alpha}\right]\,.
\end{align}

Combining~\eqref{Sec3:rel step1}--\eqref{Sec3:rel step3} with the relative quantities~\eqref{Sec3:relative eta}--\eqref{Sec3:relative f}, we arrive at the relative entropy inequality
\begin{align}\label{Sec3:relentropyhyp}
\del_t(\eta(U|\bar U;x,t)&)+\del_\alpha(q_\alpha(U|\bar U,x,t))+(G-\bar G)\cdot (R -\bar R) \le  -\nabla \bar G\del_\alpha \bar U\cdot  f_\alpha(U|\bar U;x,t)\nonumber\\
&-\bar R \cdot G(U|\bar U;x,t)+ \eta_t+q_{\alpha,x_\alpha}-\bar \eta_t-\bar q_{\alpha,x_\alpha}   \nonumber\\
& -\bar G\cdot(A_t+f_{\alpha,x_\alpha}-\bar A_t-\bar f_{\alpha,x_\alpha}) -\bar G_t \cdot(A-\bar A) -\bar G_{x_\alpha}\cdot (f_\alpha-\bar f_\alpha)\;.
\end{align}

We can reach~\eqref{Sec3:relentropyhyp} under hypotheses~{$(\text{H}_1)$}--\eqref{S2:H2} and we will see in the next section how to prove statements by combining with the other hypotheses.
Certainly, the above expression reduces to (2.20) in~\cite[pp.10]{christoforou2016relative}  (for $P\equiv 0$) or (3.35) in~\cite[pp.24]{christoforou2016relative} (for $P\ne 0$) when there is no explicit dependence of the constitutive functions on $x$ and $t$.

\subsection{Hyperbolic-Parabolic systems}\label{S3.2}
Here, we perform similar calculations for the hyperbolic parabolic systems~\eqref{Sec2:Ueqpar}
with viscosity matrices $B_{\alpha\beta}$, $\alpha,\beta=1,\dots, d$ that depend explicilty on $(x,t)$ and not only on the state $U$. For completeness, we present the computations to reach the relative entropy identity for system~\eqref{Sec2:Ueqpar}. Following the same steps as in the previous subsection, we consider two solution $U$ and $\bar U$ to~\eqref{Sec2:Ueqpar} and for simplicity, we assume that they are both strong solutions. Hence, we get
\begin{align}\label{Sec3.2:rel step1}
\del_t(\eta(U,x,t)-&\bar \eta)+\del_\alpha(q_\alpha(U,x,t)-\bar q_\alpha) +G\cdot R -\bar G\cdot \bar R = \eta_t+q_{\alpha,x_\alpha}-\bar \eta_t-\bar q_{\alpha,x_\alpha}\nonumber\\
&+\e\del_\alpha(G\cdot B_{\alpha\beta}\del_\beta U-\bar G\cdot \bar B_{\alpha\beta}\del_\beta \bar U ) \nonumber\\
& -\e\left[ (\nabla G \del_\alpha U+G_{x_\alpha})\cdot  B_{\alpha\beta}\del_\beta U- (\nabla \bar G \del_\alpha \bar U+\bar G_{x_\alpha})\cdot \bar B_{\alpha\beta}\del_\beta \bar U\right]
\end{align}
and
\begin{align}\label{Sec3.2:rel step2}
\partial_t&\left[\bar G\cdot(A-\bar A)\right]+\partial_\alpha\left[\bar G\cdot (f_\alpha-\bar f_\alpha)\right] +\bar G\cdot(P-\bar P) =\nonumber\\
=&\nabla\bar G\, \partial_\alpha \bar U\cdot  f_\alpha(U|\bar U; x,t) 
-\bar R\cdot \nabla\bar G(\nabla \bar A)^{-1} (A-\bar A)\nonumber\\
&+\bar G_t \cdot(A-\bar A)+\bar G_{x_\alpha}\cdot (f_\alpha-\bar f_\alpha)\nonumber\\
&+\e \nabla\bar G(\nabla \bar A)^{-1}\del_\alpha(\bar B_{\alpha\beta}\del_\beta \bar U)\cdot (A-\bar A)+\e\bar G\cdot\del_\alpha( B_{\alpha\beta}\del_\beta U-\bar B_{\alpha\beta}\del_\beta\bar U)
\end{align}
to arrive at
\begin{align}\label{Sec3.2:relentropypar1} 
\del_t(\eta(U|\bar U&;x,t))+\del_\alpha(q_\alpha(U|\bar U,x,t))+(G-\bar G)\cdot (R -\bar R) =  -\nabla \bar G\del_\alpha \bar U\cdot  f_\alpha(U|\bar U;x,t)\nonumber\\
&-\bar R \cdot G(U|\bar U;x,t) + \eta_t+q_{\alpha,x_\alpha}-\bar \eta_t-\bar q_{\alpha,x_\alpha}  \nonumber\\
& -\bar G\cdot(A_t+f_{\alpha,x_\alpha}-\bar A_t-\bar f_{\alpha,x_\alpha}) -\bar G_t \cdot(A-\bar A) -\bar G_{x_\alpha}\cdot (f_\alpha-\bar f_\alpha)+\e J\,.
\end{align}
Here, the $\e$-terms are
\begin{align}
J:&=\del_\alpha\Big(G\cdot B_{\alpha\beta}\del_\beta U-\bar G\cdot\bar B_{\alpha\beta}\del_\beta\bar U)
-\bar B_{\alpha\beta}\del_\beta\bar U\cdot(\nabla\bar A)^{-T}\nabla\bar G^{T}(A-\bar A)\nonumber\\
&\,\qquad-\bar G\cdot( B_{\alpha\beta}\del_\beta U-\bar B_{\alpha\beta}\del_\beta\bar U)\Big)\nonumber\\
&-(\nabla G \del_\alpha  U -\nabla \bar G \del_\alpha  \bar U)\cdot (B_{\alpha\beta}\del_\beta U- \bar B_{\alpha\beta}\del_\beta \bar U)\nonumber\\
&+ \bar B_{\alpha\beta}\del_\beta \bar U\cdot\Big [-\nabla G \del_\alpha U+\nabla\bar G\del_\alpha\bar U+\nabla\bar G\del_\alpha((\nabla \bar A)^{-1}(A-\bar A))\nonumber\\
&\,\qquad+\nabla^2\bar G:(\del_\alpha\bar U,(\nabla\bar A)^{-1}(A-\bar A))\Big]\nonumber\\
&-(G_{x_\alpha}-\bar G_{x_\alpha})
\cdot\left[ B_{\alpha\beta} ( \del_\beta  U-\del_\beta\bar U)+(B_{\alpha\beta}-\bar B_{\alpha\beta}) \del_\beta \bar U\right]\nonumber\\
&+\bar B_{\alpha\beta}\del_\beta\bar U\cdot\left[-G_{x_\alpha}+\bar G_{x_\alpha}+\nabla\bar G_{x_\alpha}(\nabla\bar A)^{-1}(A-\bar A)\right]\,.
\end{align}
and they can be expressed as $J=\del_\alpha j_\alpha-D+Q$. Here, 
\begin{align}
j_\alpha:
&= (G - \bar G) \cdot ( B_{\alpha \beta} \del_\beta U - \bar B_{\alpha \beta} \del_\beta \bar U ) + 
\bar B_{\alpha \beta}   \del_\beta \bar U \cdot G(U | \bar U;x,t)\nonumber\\
&+\bar B_{\alpha \beta}  \del_\beta\bar U\cdot \nabla \bar G   \phi(U | \bar U;x,t)
\end{align}
for $\alpha=1\dots, d$ and it is part of the fluxes; the next term $D$ is
\be\label{S3.2:D}
D := \sum_{\alpha,\,\beta}\nabla G  \del_\alpha (U - \bar U) \cdot   B_{\alpha \beta} \, \del_{\beta} (U - \bar U)\,,
\ee
that is positive semi-definite and it captures the effect of dissipation by choosing $\xi_\alpha=\del_\alpha(U-\bar U)\in\R^n$; last, the term $Q=\sum_{i=1}^9 Q_i$ is the sum of following terms:
\begin{align}
Q_1 &:= - \del_{x_\alpha} \big ( \bar \nabla G^T \bar B_{\alpha \beta}  \del_\beta \bar U \big ) \cdot \phi(U | \bar U;x,t) \;,
\label{definq1}
\\
Q_2 &:= -  \bar B_{\alpha \beta} \del_\beta \bar U \cdot \big ( \nabla G -  \nabla \bar G  \big ) ( \del_\alpha U- \del_\alpha\bar U)\;,
\label{definq2}
\\
Q_3 &:= - \bar B_{\alpha \beta} \del_\beta \bar U \cdot G_1(U | \bar U;x,t) \del_\alpha\bar U\,,
\label{definq3}
\\ 
Q_4 &:= - \nabla G ( \del_\alpha U - \del_\alpha \bar U) \cdot ( B_{\alpha \beta} - \bar B_{\alpha \beta}  ) \del_\beta\bar  U\,,
\label{definq4}
\\
Q_5 &:= -  ( \nabla G  - \nabla \bar G  ) \del_\alpha \bar U \cdot B_{\alpha \beta} ( \del_\beta U - \del_\beta\bar  U )\,,
\label{definq5}
\\
Q_6 &:=  -  ( \nabla G  - \nabla \bar G  )\del_\alpha \bar U \cdot ( B_{\alpha \beta}  - \bar B_{\alpha \beta}  ) \del_\beta\bar  U\,,
\label{definq6}
\\
Q_7 &:=  - ( G_{x_\alpha}  -\bar G_{x_\alpha}  )\cdot  B_{\alpha \beta} (\del_\beta U-\del_\beta \bar U)  \,,
\label{definq7}
\\
Q_8 &:=  -  ( G_{x_\alpha}  -\bar G_{x_\alpha}  )\cdot ( B_{\alpha \beta}  - \bar B_{\alpha \beta}  ) \del_\beta\bar  U\,,
\label{definq8}
\\
Q_9 &:=  -  \bar B_{\alpha\beta}  \del_\beta\bar  U \cdot G_2(U|\bar U;x,t) \;.
\label{definq9}
\end{align}
%
Above, we set 
\begin{align}
\label{defin1}
\phi (U | \bar U;x,t) &:= (\nabla \bar A)^{-1} (A - \bar A )  - (U - \bar U)\;,
\\
\label{defin2}
G_1(U | \bar U;x,t) &:= \nabla G- \nabla \bar G  - \nabla^2 \bar G  \cdot (\nabla \bar A)^{-1} (A - \bar A )\;,\\
\label{defin3}
G_2(U | \bar U;x,t) &:=G_{x_\alpha} - \bar G_{x_\alpha} - \nabla \bar G_{x_\alpha} \cdot (\nabla  \bar A)^{-1} (A - \bar A )\;,
\end{align}
as the quadratic parts of the expansions of $U$, $\nabla G$ and $G_{x_\alpha}$, respectively. All terms in $Q$ can be viewed as errors, since they are quadratic as $|U - \bar U| \to 0$. Thus, relation~\eqref{Sec3.2:relentropypar1} takes the form
\begin{align}\label{Sec3.2:relentropypar} 
\del_t(\eta(U|\bar U&;x,t))+\del_\alpha(q_\alpha(U|\bar U,x,t)-\e j_\alpha)+(G-\bar G)\cdot (R -\bar R) +\e D \le \nonumber\\
\le & -\nabla \bar G\del_\alpha \bar U\cdot  f_\alpha(U|\bar U;x,t)
-\bar R \cdot G(U|\bar U;x,t) + \eta_t+q_{\alpha,x_\alpha}-\bar \eta_t-\bar q_{\alpha,x_\alpha}  \nonumber\\
& -\bar G\cdot(A_t+f_{\alpha,x_\alpha}-\bar A_t-\bar f_{\alpha,x_\alpha}) -\bar G_t \cdot(A-\bar A) -\bar G_{x_\alpha}\cdot (f_\alpha-\bar f_\alpha)+\e Q\;.
\end{align}
We remark that due to inhomogeneity, the terms $Q_7$, $Q_8$ and $Q_9$ arise in the above calculations and they are not present in the work~\cite{christoforou2016relative}. The interesting point is that these are still expressed in a similar manner as the the other terms $Q_i$, $i\in\{1,\dots,6\}$ preserving the quadratic growth in $|U-\bar U|$.


%
%
%

\section{Theorems}\label{sec4}

In this section, we prove theorems for the hyperbolic inhomogeneous system and the hyperbolic-parabolic one in the next three subsections. 

To prepare the ground for the proofs of the theorems, we establish in the next lemma bounds on the relative entropy that are useful to interpret it as a ``distance formula" and also compare it with other relative quantities that are present in~\eqref{Sec3:relentropyhyp} and \eqref{Sec3.2:relentropypar}. We recall that by $B_M$, we denote a ball in $\mathbb{R}^n$ centered at the origin with radius $M>0$.

\begin{lemma}\label{S4:lemma}
Assume that Hypotheses A $($i.e.~$(\text{H}_1)$--\eqref{S2:H3}, \eqref{S2:Hgr1}--\eqref{S2:Hgr3} and~\eqref{S2:HB}$)$ hold true and let the state $\bar U$ take values in the ball $B_M$. Then there exist $r_1>M$ and $r_2>M$ and positive constants $c_1$, $c_1'$, $c_2$ and $c_2'$ depending only on $M$ such that
\be\label{S4:lemform1}
\eta(U|\bar U;x,t)\ge 
\left\{
\begin{array}{lr}
c_1|A(U,x,t)-\bar A|^2 & \text{if}\quad |U|\le r_1,\,\,
\\
c_2\eta(U,x,t) & \text{if}\quad |U|\ge r_1,\,\,

\end{array}
\right.
\ee
and
\begin{equation}
\label{lemform3}
\eta(U|\bar U;x,t)\ge 
\begin{cases}
c_1'  \big | U - \bar U \big |^2        &  \text{if}\quad \; |U| \le r_2, 
\\
c_2'  \,   \big | U - \bar U \big |^p    &  \text{if}\quad \; |U| \ge r_2,
\end{cases}
\end{equation}
for all $x\in\mathbb{R}^d$ and $t>0$. Moreover, for each $\alpha=1, ..., d$
\begin{align}
&| f_\alpha (U|\bar U;x,t) | \le c_3  \eta ( U|\bar U;x,t )   \quad &\mbox{for $U \in \R^n$, }\label{lemform2}
\end{align}
for all $x\in\mathbb{R}^d$ and $t>0$. 
Assuming further~\eqref{S2:HR1} and ~\eqref{S2:HG}, there exists a constant $c_3$ depending on $M$ such that 
\begin{align}
&|(G(U,x,t)-\bar G)(R(U,x,t)-\bar R)| \le c_3  \eta ( U|\bar U;x,t )   \quad &\mbox{for $U \in \R^n$,} \label{lemform3}\\
&|G(U|\bar U,x,t)|\le c_3  \eta ( U|\bar U;x,t )   \quad& \mbox{for $U \in \R^n$, }\label{lemform4}
\end{align}
for all $x\in\mathbb{R}^d$ and $t>0$. 
\end{lemma}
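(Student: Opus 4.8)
The plan is to establish each inequality by a local/global dichotomy, exploiting Taylor expansion near $\bar U$ on the one hand and the growth hypotheses $(\text{H}^{gr})$ for $|U|$ large on the other. First I would record the Taylor expansion of $\eta(U,x,t)$ in the conserved variable $V=A(U,x,t)$ about $\bar V=A(\bar U,x,t)$: by \eqref{appform3}, $G(\bar U,x,t)=(\nabla_V\tilde\eta)(\bar V,x,t)$, so $\eta(U|\bar U;x,t)=\tilde\eta(V,x,t)-\tilde\eta(\bar V,x,t)-\nabla_V\tilde\eta(\bar V,x,t)\cdot(V-\bar V)$ is precisely the relative entropy of $\tilde\eta$ in $V$. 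Since $\tilde\eta$ is uniformly convex in $V$ (hypothesis \eqref{S2:H3} via \eqref{appform2}) and $\nabla_V^2\tilde\eta$ is uniformly bounded on $A(B_M,x,t)$ — which is a bounded set by \eqref{S2:HB} — a second-order Taylor expansion with integral remainder gives $c_1|A(U,x,t)-\bar A|^2\le\eta(U|\bar U;x,t)\le C_1|A(U,x,t)-\bar A|^2$ whenever $U$ stays in a fixed ball $B_{r_1}$, with constants depending only on $M$ and $r_1$. Composing with $\nabla A$, which is nonsingular and (together with its inverse) bounded on $B_{r_1}$ by \eqref{S2:HB} and $(\text{H}_1)$, converts this into the $|U-\bar U|^2$ bound of \eqref{lemform3} on $B_{r_2}$.

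For $|U|$ large, I would use \eqref{S2:Hgr1}: $\eta(U,x,t)\ge\beta_1(|U|^p+1)-\beta_3\to\infty$, while the subtracted term $G(\bar U,x,t)\cdot(A(U,x,t)-A(\bar U,x,t))$ is controlled by \eqref{S2:Hgr3} (since $\bar U\in B_M$ makes $|G(\bar U,x,t)|$ bounded by \eqref{S2:HB}) so that $G(\bar U,x,t)\cdot(A(U,x,t)-\bar A)=o(\eta(U,x,t))$ as $|U|\to\infty$, uniformly in $(x,t)$; hence there is $r_1$ with $\eta(U|\bar U;x,t)\ge c_2\eta(U,x,t)$ for $|U|\ge r_1$. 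Combining with \eqref{S2:Hgr1} again gives $\eta(U|\bar U;x,t)\ge c_2\beta_1|U|^p-C\ge c_2'|U-\bar U|^p$ for $|U|$ beyond some $r_2$ (absorbing the constant using $|U-\bar U|\le|U|+M$). The flux estimate \eqref{lemform2} is then a direct consequence: for $|U|\le r_1$ one uses $|f_\alpha(U|\bar U;x,t)|\le C|U-\bar U|^2\le C'\eta(U|\bar U;x,t)$ from the local bound just proved (the relative flux $f_\alpha(U|\bar U;x,t)$ is quadratic in $U-\bar U$ by \eqref{Sec3:relative f} and the boundedness of $f_\alpha,\nabla f_\alpha,(\nabla A)^{-1}$), while for $|U|\ge r_1$ one uses $|f_\alpha(U,x,t)|=o(\eta(U,x,t))$ from \eqref{S2:Hgr2}, the other terms in \eqref{Sec3:relative f} being $O(1)+o(\eta)$ by \eqref{S2:HB} and \eqref{S2:Hgr3}, so the whole expression is $\le c_3\eta(U,x,t)\le (c_3/c_2)\eta(U|\bar U;x,t)$.

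Finally, \eqref{lemform3} and \eqref{lemform4} follow the same template: for $|U|\le r_1$, expand $G(U|\bar U;x,t)$ (resp. $(G-\bar G)\cdot(R-\bar R)$) about $\bar U$ — the former is quadratic in $U-\bar U$ by definition \eqref{Sec3:relative G} with coefficients bounded via \eqref{S2:HB}, the latter is quadratic because $G-\bar G=O(|U-\bar U|)$ by \eqref{S2:HB} and $R-\bar R=O(|U-\bar U|)$ by \eqref{S2:HBxt} and \eqref{S2:Hxt2}–\eqref{S2:Hxt3} — and dominate by the local lower bound $\eta(U|\bar U;x,t)\ge c_1'|U-\bar U|^2$. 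For $|U|\ge r_1$, invoke \eqref{S2:HR1} and \eqref{S2:HG}: $|G(U,x,t)\cdot R(U,x,t)|=o(\eta)$ and $|G(U,x,t)|=o(\eta)$ as $|U|\to\infty$, with the $\bar U$-terms bounded by \eqref{S2:HB}, \eqref{S2:HBxt}, and \eqref{S2:Hgr3}, so again the quantities are $o(\eta(U,x,t))\le c_3\eta(U|\bar U;x,t)$. The main obstacle I anticipate is bookkeeping the uniformity in $(x,t)$: every ``$o(1)$ as $|U|\to\infty$'' in Hypotheses A–B must be used in its stated form (which is pointwise in $(x,t)$) but the resulting thresholds $r_1,r_2$ and constants $c_i$ must come out independent of $(x,t)$ — this is legitimate precisely because the $o(1)$ statements hold for every $(x,t)$ with no $(x,t)$-dependent modulus appearing, combined with the uniform bounds \eqref{S2:HB}, \eqref{S2:HBxt}, \eqref{S2:H3}; making this rigorous requires stating the growth hypotheses as uniform-in-$(x,t)$ limits, which the paper tacitly intends, and then the argument is routine.
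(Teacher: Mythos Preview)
Your proposal is correct and follows essentially the same approach as the paper: rewrite $\eta(U|\bar U;x,t)$ as the relative entropy of $\tilde\eta$ in the conserved variable $V=A(U,x,t)$, use uniform convexity of $\tilde\eta$ (via \eqref{S2:H3}) for the local quadratic lower bounds, and use \eqref{S2:Hgr1}--\eqref{S2:Hgr3} together with \eqref{S2:HB} to absorb the linear correction term and obtain $\eta(U|\bar U;x,t)\ge c_2\eta(U,x,t)$ for $|U|$ large; the flux, multiplier, and source estimates then follow by the same local/global split. Your closing remark on uniformity in $(x,t)$ is apt and in fact sharper than the paper, which treats this point tacitly; one small caution is that for the local bound on $(G-\bar G)\cdot(R-\bar R)$ you invoke \eqref{S2:HBxt} and \eqref{S2:Hxt2}, which belong to Hypotheses~B and are not formally listed among the assumptions of the lemma---the paper handles this by simply writing ``following the same steps,'' so your treatment is no less rigorous than the original.
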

\begin{proof}
The proof follows in similar lines to~\cite[App. A]{christoforou2016relative}. Here, we present the main steps and skip the details emphasizing how to overcome the explicit dependence on $x$ and $t$. 

First, we recall that
\begin{equation}
\label{workrelen}
\begin{aligned}
\eta ( U | \bar U;x,t ) &\doteq \eta (U,x,t)- \eta (\bar U,x,t )- G(\bar U,x,t) \cdot (A(U,x,t) - A(\bar U,x,t))
\\
&= \tilde{\eta}(V,x,t)  -\tilde{\eta}(\bar V,x,t) - (\nabla_V \tilde{\eta}) (\bar V,x,t)  \; (A(U,x,t)  - A(\bar U,x,t)) \, 
\\
&\doteq \tilde{\eta} \big (V | \bar V;x,t\big ) \, ,
\end{aligned}
\end{equation}
when $V\equiv A(U,x,t)$. By hypothesis~\eqref{S2:H3}, since $\tilde{\eta}$ is uniformly convex on compact subsets of $\R^n$, we get that $\eta ( U | \bar U;x,t )$ is uniformly positive for all $U\ne \bar U$ and all $x$ and $t$. Moreover, by applying also hypothesis $(\text{H}_1)$, we deduce that $\eta ( U | \bar U;x,t )=0$ if and only if $U=\bar U$.
Since $\bar U\in B_M$, by \eqref{S2:Hgr1} and \eqref{S2:HB}, we get
\begin{align}
\eta ( U | \bar U;x,t ) &\ge \eta (U,x,t) - C_1 - C_2 |A(U,x,t) | \, .
\end{align}
Then we proceed as in~\cite[App.A]{christoforou2016relative}  using \eqref{S2:Hgr1}, \eqref{S2:Hgr3}  to select $r_1>M$ such that
$$
\eta ( U | \bar U;x,t )
 \ge\frac{1}{4} \eta(U,x,t) \qquad \qquad   \mbox{for $|U| \ge r_1$, $x\in\mathbb{R}^d$, $t>0$}\;.
$$
In the complement $|U| \le r_1$, we use~\eqref{workrelen}$_2$ and that $\tilde{\eta}(V)$ is uniformly convex in $V$  on compact subsets of $\R^n$. Thus, ~\eqref{S4:lemform1} follows.

Next, we proceed as above selecting $r_2$ possibly even larger than $r_1$ such that
$$
\begin{aligned}
\eta ( U | \bar U;x,t )  &\ge \frac{1}{4}\eta (U,x,t) \ge \frac{\beta_1}{8} |U|^p 
\ge \frac{\beta_1}{16} |U - \bar U |^p \qquad  |U| \ge  r_2 , 
\end{aligned}
$$
using~\eqref{S2:Hgr1} for $|U|\ge r_2$ and $\bar U\in B_M$. Also, by the uniform convexity of $\tilde{\eta}(V)$ in $V=A(U,x,t)$ and the invertibility of $A(\cdot,x,t)$, we get
\begin{equation}
\label{equivnorm2}
 \bar c_1' |U - \bar U |^2 \le \eta ( U | \bar U;x,t ) \le C_1' | U - \bar U |^2    \qquad \qquad   \mbox{for $|U| \le r_2$, } \, 
\end{equation}
since $\bar U \in B_M \subset B_{r_2}$ for some constants $c_1'$, $C_1'$ that correspond to the infimum and supremum of the Hessian $\nabla^2\tilde{\eta}(V,x,t)$ in the domain $|U|\le r_2$ multiplied by a bound on the inverse of $\nabla A$.

To establish \eqref{lemform2}, we estimate
$$
\begin{aligned}
| f_\alpha (U | \bar U;x,t)  | &=  | f_\alpha (U,x,t) - \bar f_\alpha  - \nabla \bar f_\alpha   \nabla \bar A^{-1} (A (U,x,t)- \bar A) |
\\
&\le  | f_\alpha (U,x,t) | + k_1 |A(U,x,t)| + k_2
\end{aligned}
$$
for $\bar U\in B_M$ and some positive constants $k_1$ and $k_2$ that follow from the boundedness condition~\eqref{S2:HB}. Then by~\eqref{S2:Hgr2}--\eqref{S2:Hgr3}, we immediately get
$$
| f_\alpha (U | \bar U;x,t)  |  \le k_3 \eta (U,x,t)  \qquad \qquad   \mbox{for $|U| \ge r_1$} ,
$$
for some positive constant $k_3$ and the radius $r_1$ as selected above. In the complement $|U|<r_1$, we estimate
\begin{align}\label{S4:lemmafinal}
| f_\alpha (U | \bar U;x,t)  | 
&\le | f_\alpha  - \bar f_\alpha - \nabla \bar f_\alpha  (U-\bar U)) |\nonumber
\\
&\qquad +  \left |  \nabla \bar f_\alpha   \nabla \bar A^{-1} \big (A(U,x,t) - \bar A - \nabla \bar A (U - \bar U)  \big ) \right |\nonumber
\\
&\le  k_4 |A(U,x,t) - \bar A|^2  \, ,
\end{align}
for $|U| \le r_1$, where $k_4$ is some positive constant. Then~\eqref{lemform2} follows by combining~\eqref{S4:lemmafinal} with~\eqref{S4:lemform1}. Following the same steps, ~\eqref{lemform3} and ~\eqref{lemform4} hold true assuming ~\eqref{S2:HR1} and ~\eqref{S2:HG}, respectively.
%
\end{proof}

\subsection{Weak-strong uniqueness in the class of dissipative measure valued solutions}\label{S4.1}

The aim is to prove a weak-strong uniqueness result comparing two solutions, a weak solution $U$ and a strong solution $\bar U$, to~\eqref{Sec3:Ueq} with respect to their initial data. We do this in the context of \emph{dissipative measure-valued} solutions $U$ that is a more general class of entropy weak solutions. To arrive at the definition of a \emph{dissipative measure-valued} solution to~\eqref{Sec3:Ueq}, we exploit briefly how they arise from a sequence of approximate solutions  $U^\eps$ to
\begin{equation}
\label{hypcleps}
\begin{aligned}
\del_t (A(U^\eps,x,t))+\del_\alpha(f_\alpha(U^\eps,x,t))+P(U^\eps,x,t)= \mathcal{R}_1^\eps,
\end{aligned}
\end{equation}
satisfying an entropy inequality
\begin{equation}
\label{entropyhypeps}
\begin{aligned}
\del_t(\eta(U^\eps,x,t))+\del_\alpha q_\alpha(U^\eps,x,t) +Z(U^\eps,x,t)
\le \mathcal{R}_2^\eps,
\end{aligned}
\end{equation}
with both $\mathcal{R}_1^\eps\to0$ and $\mathcal{R}_2^\eps\to0$ in distributions as $\eps\to0+$.

To simplify the analysis, throughout this subsection, we consider the spatially periodic case with domain 
$\To^d =(\R/2\pi\Z)^d$ 
and denote by $Q_T:= \To^d \times[0,T)$ for $T\in(0,\infty)$ the domain of the solution and $\overline{Q}_T:= \To^d \times[0,T]$.

Setting $V = A(U,x,t)$ and using the invertibility of $A(\cdot,x,t)$ for each $(x,t)$, we write  $\eta (U,x,t) = \tilde{\eta} (A(U,x,t),x,t)$. Having that $\tilde{\eta}(V,x,t)$ is convex and positive, we assume further that there are positive constants $\beta_i'$, $i=1,2,3$ such that
 \begin{equation}\label{S2.2 A4}
\frac{1}{\beta_1'}( |A(U,x,t)|^q + 1) - \beta_3' \le \tilde{\eta}(A(U,x,t))\le \beta_2'( |A(U,x,t)|^q + 1) ,
\tag{H$\,_{gr}^*$}
\end{equation}
for all $(x,t)\in Q_T$ and some $q>1$.

Now, let us assume that 
$\{U^\eps\}$ is a sequence of Lebesgue measurable functions with a convergent subsequence (again called $U^\eps$) that is associated with the Young measure  $\boldsymbol{\nu}_{(x,t)}$, which is a weak$^*$ measurable family of Radon probability measures (cf.~\cite{MR725524, MR1036070, ab97, dm87}). Then, for all continuous functions $f(\cdot,x,t)$ such that $\displaystyle\lim_{|\lambda|\to\infty}\dfrac{f(\lambda,x,t)}{1+|\lambda|^p}=0$ and for all $(x,t)$, we get in the limit
\begin{equation}
f(U^\eps,x,t)\rightharpoonup\langle{\boldsymbol{\nu}}_{x,t},f(\lambda,x,t)\rangle
\end{equation}
using the Young measure $\boldsymbol{\nu}=({\boldsymbol{\nu}}_{(x,t)})_{(x,t)\in\overline{Q_T}}$ associated to the family $\{U^\eps\}$.

If the associated Young measure to the sequence $\{V^\eps\}$ is denoted by ${\boldsymbol{N}}=({\boldsymbol{N}}_{(x,t)})_{(x,t)\in\overline{Q_T}}$, we have
\begin{equation}\label{nu-N}
\langle{\boldsymbol{\nu}}_{x,t},g(A(\lambda,x,t),x,t)\rangle=\langle{\boldsymbol{\nu}}_{x,t},f(\lambda,x,t)\rangle=\langle{\boldsymbol{N}}_{x,t},g(\rho,x,t)\rangle
\end{equation}
whenever $f(\lambda,x,t)=g(A(\lambda,x,t),x,t)$.

As indicated in~\cite{christoforou2016relative}, the relation $\eta(\cdot,x,t)$ via $\eta(U,x,t)=\tilde{\eta}(A(U,x,t),x,t)$ and hypothesis~\eqref{S2:H3} imply the convexity of $\tilde{\eta}(\cdot,x,t)$ (recall~\eqref{appform2}) 
and the oscillations and concentrations can be represented via
\begin{equation}
\eta(U^\eps,x,t) dxdt\rightharpoonup\langle{\boldsymbol{\nu}}_{x,t},\eta(\cdot,x,t)\rangle dxdt+\boldsymbol{\gamma}(dxdt)
\end{equation}
as $\eps\to0+$, 
with the oscillations obtained via the limiting process
\begin{equation}
\langle{\boldsymbol{\nu}}_{x,t},\eta(\lambda,x,t)\rangle\doteq \lim_{R\to\infty}\langle{\boldsymbol{N}}_{x,t},\tilde{\eta}(A(\lambda,x,t),x,t)\cdot 1_{\tilde{\eta}(A,x,t)<R}+R\cdot 1_{\tilde{\eta}(A,x,t)\ge R}\rangle
\end{equation}
while the \emph{concentration measure} $\boldsymbol{\gamma}$ expressed as
\begin{equation}
\boldsymbol{\gamma}=\text{wk}^*-\lim_{\eps\to0+}(\eta(U^\eps,x,t)- \langle{\boldsymbol{\nu}}_{x,t},\eta(\cdot,x,t)\rangle  )\in\mathcal{M}^+(\overline{Q_T}).
\end{equation}

Now we state the definition of \emph{dissipative measure-valued solutions}, that satisfy an averaged and integrated form of the entropy inequality with concentration effects in the $L^p$ framework $p<\infty$.

\begin{definition}\label{defdissi}
A dissipative measure valued solution $(U,\boldsymbol{\nu},\boldsymbol{\gamma})$ with concentration to~\eqref{Sec3:Ueq} consists of $U\in L^\infty(L^p)$, a Young measure $\boldsymbol{\nu}=({\boldsymbol{\nu}}_{x,t})_{\{(x,t)\in \bar{Q}_T\}}$ and a non-negative Radon measure $\boldsymbol{\gamma}\in\mathcal{M}^+(Q_T)$ such that
$U (x,t)  = \langle {\boldsymbol{\nu}}_{(x,t)} , \lambda \rangle$ and
\begin{align}\label{dfmv1}
\iint&\langle {\boldsymbol{\nu}}_{x,t}, A_i(\lambda,x,t)\rangle \del_t\varphi_{i}\,dx\,dt+\iint \langle {\boldsymbol{\nu}}_{x,t}, f_{i,\alpha}(\lambda,x,t)\rangle \del_\alpha\varphi_{i}dx\,dt\nonumber\\
&\qquad- \iint \langle {\boldsymbol{\nu}}_{x,t}, P_i(\lambda,x,t)\rangle \varphi_{i}(x,t)dx\,dt
+\int \langle\boldsymbol{\nu}_0,A_i(\lambda,x,0)\rangle\varphi_i(x,0)dx=0
\end{align}
$i=1,\dots,n,$ for any $\varphi\in C^1_c(Q\times[0,T))$ and
\begin{align}\label{dfmv2}
\iint\frac{d\xi}{dt}& \left[\langle {\boldsymbol{\nu}}_{x,t}, \eta(\lambda,x,t)\rangle dxdt+\boldsymbol{\gamma}(dxdt)    \right]+\int \xi(0)\left[\langle {\boldsymbol{\nu}_0}_{x}, \eta(\lambda,x,0)\rangle dx+\boldsymbol{\gamma}_0(dx)  \right]\nonumber\\
&-\iint\xi(t) \langle {\boldsymbol{\nu}}_{x,t}, Z(\lambda,x,t)\rangle dx\,dt
\ge 0,
\end{align}
 for all $\xi=\xi(t)\in C^1_c([0,T))$ with $\xi\ge 0$.
\end{definition}

Recall $Z:=G\cdot R-\eta_t-q_{\alpha,x_\alpha}$ appears in~\eqref{Sec3:etaeq}. Also, we note that a generalization of the above definition can be given following the work in~\cite{GOA} taking into account that concentration measures associated to all constitutive functions $A$ $f_\alpha$, $P$ may be present. 

Now, we prove uniqueness in the $L^p$ framework for $1<p<\infty$ within a class of dissipative measure-valued solutions as defined in Definition~\ref{defdissi} as long there exists a strong solution $\bar U$ having same initial data and no initial concentration measure $\boldsymbol{\gamma}_0=0$. This result is an extension of previous ones with no inhomogeneity for $L^\infty$ or $L^p$, (cf.~\cite{bds11, GSW2015, FKMT, christoforou2016relative}).
%
%
\begin{theorem}
\label{thmweakstrong} 
Assume that Hypotheses A and B hold true and also \eqref{S2.2 A4} is satisfied, while the entropy $\eta$ is nonnegative. Let $(U,\boldsymbol{\nu}, \boldsymbol{\gamma})$ be a dissipative measure--valued solution,  $U = \langle \boldsymbol{\nu}_{x,t} , \lambda \rangle$,  and  $\bar U\in W^{1,\infty}(\overline{Q_T})$ a strong solution to~\eqref{Sec3:Ueq}, then, if the  initial data of $U$
satisfy $\boldsymbol{\gamma}_0=0$ and ${\boldsymbol{\nu}_0}_x=\delta_{\bar U_0}(x)$, it holds $\boldsymbol{\nu}=\delta_{\bar{U}}$ and $U=\bar{U}$ almost everywhere on $Q_T$.
\end{theorem}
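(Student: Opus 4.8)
The plan is to run the relative entropy method on the measure--valued side: integrate the relative entropy inequality \eqref{Sec3:relentropyhyp} against a suitable test function in time, using the measure--valued formulations \eqref{dfmv1}--\eqref{dfmv2} to give meaning to each term when $U$ is replaced by the Young measure $\boldsymbol{\nu}$, and close a Gr\"onwall argument. Concretely, I would first define the averaged relative entropy
\[
\mathcal{E}(t):=\int_{\To^d}\Big(\langle\boldsymbol{\nu}_{x,t},\eta(\lambda|\bar U;x,t)\rangle\,dx+\boldsymbol\gamma(dx)\Big),
\]
where I expand $\eta(\lambda|\bar U;x,t)=\eta(\lambda,x,t)-\eta(\bar U,x,t)-G(\bar U,x,t)\cdot(A(\lambda,x,t)-A(\bar U,x,t))$ so that the concentration measure $\boldsymbol\gamma$ attaches only to the $\eta(\lambda,x,t)$ piece (the other two pieces have sub-$p$ growth by \eqref{S2:Hgr1}, \eqref{S2:Hgr3} and boundedness \eqref{S2:HB}, so they are $\boldsymbol\nu$--integrable with no concentration). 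The growth hypotheses \eqref{S2:Hgr2}, \eqref{S2:Hgr3}, \eqref{S2:Hgr4}, \eqref{S2:Hgr5}, \eqref{S2:HR1}/\eqref{S2:HR2} and \eqref{S2:HG} are exactly what is needed to guarantee that all the remaining terms appearing in \eqref{Sec3:relentropyhyp} — namely $q_\alpha(\lambda|\bar U;x,t)$, $f_\alpha(\lambda|\bar U;x,t)$, $(G-\bar G)\cdot(R-\bar R)$, $\bar R\cdot G(\lambda|\bar U;x,t)$, the $\eta_t,q_{\alpha,x_\alpha}$ mismatches, and the $\bar G_t,\bar G_{x_\alpha}$ terms — are continuous functions of $\lambda$ with strictly sub-$p$ growth, hence integrable against $\boldsymbol\nu_{x,t}$ and free of concentration. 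This is the step where Hypotheses B do their work: they let me pass from the pointwise identity to an averaged, integrated inequality.

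Next I would assemble the differential inequality for $\mathcal{E}$. Testing the measure--valued entropy inequality \eqref{dfmv2} with $\xi(t)$ and subtracting the weak form \eqref{dfmv1} applied to the test functions built from $\bar G(\bar U,x,t)$ and the equation \eqref{Sec3:barUeq} satisfied classically by $\bar U\in W^{1,\infty}(\overline{Q_T})$, one reproduces the integrated version of \eqref{Sec3:relentropyhyp}; the flux divergences $\del_\alpha q_\alpha(\cdot|\bar U)$ integrate to zero by periodicity. The initial-data hypotheses ${\boldsymbol\nu_0}_x=\delta_{\bar U_0(x)}$ and $\boldsymbol\gamma_0=0$ give $\mathcal{E}(0)=0$. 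Using Lemma~\ref{S4:lemma}: inequality \eqref{lemform2} bounds $|f_\alpha(\lambda|\bar U;x,t)|$ by $c_3\eta(\lambda|\bar U;x,t)$, which together with $\|\nabla\bar G\,\del_\alpha\bar U\|_{L^\infty}<\infty$ controls the first term on the right of \eqref{Sec3:relentropyhyp}; inequalities \eqref{lemform3}, \eqref{lemform4} control $(G-\bar G)\cdot(R-\bar R)$ and $\bar R\cdot G(\lambda|\bar U;x,t)$ (using $\|\bar R\|_\infty<\infty$ from \eqref{S2:HBxt}); and the inhomogeneity mismatch terms $\eta_t-\bar\eta_t+q_{\alpha,x_\alpha}-\bar q_{\alpha,x_\alpha}$, $A_t-\bar A_t+f_{\alpha,x_\alpha}-\bar f_{\alpha,x_\alpha}$, $A-\bar A$, $f_\alpha-\bar f_\alpha$ are each $\le C|U-\bar U|^2$ on the range of $\bar U$ by \eqref{S2:Hxt}--\eqref{S2:Hxt3}, which by \eqref{lemform3} (the $|U-\bar U|^2$ lower bound for $|U|\le r_2$) and the sub-$p$ growth for $|U|\ge r_2$ are again dominated by $C\,\eta(\lambda|\bar U;x,t)$ after taking $\langle\boldsymbol\nu_{x,t},\cdot\rangle$; in the case \eqref{S2:HR2} the term $(G-\bar G)\cdot(R-\bar R)$ has a favorable sign and can simply be dropped from the left-hand side. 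Averaging over $\To^d$ and choosing $\xi$ to approximate $\mathbf 1_{[0,s]}$ yields
\[
\mathcal{E}(s)\le C\int_0^s\mathcal{E}(t)\,dt,\qquad \mathcal{E}(0)=0,
\]
whence $\mathcal{E}\equiv0$ by Gr\"onwall. Since $\eta(\lambda|\bar U;x,t)\ge c_1'|\lambda-\bar U|^2$ for $|\lambda|\le r_2$ and is uniformly positive for $\lambda\ne\bar U$ (Lemma~\ref{S4:lemma} together with strict convexity of $\tilde\eta$), $\mathcal{E}(s)=0$ forces $\boldsymbol\gamma=0$ and $\boldsymbol\nu_{x,t}=\delta_{\bar U(x,t)}$ for a.e.\ $(x,t)$, hence $U=\langle\boldsymbol\nu_{x,t},\lambda\rangle=\bar U$ a.e.\ on $Q_T$.

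The main obstacle I anticipate is the careful bookkeeping of concentrations: one must verify that \emph{every} nonlinear-in-$\lambda$ term that appears after expanding \eqref{Sec3:relentropyhyp} genuinely has growth $o(|\lambda|^p)$ at infinity, so that it is represented by $\langle\boldsymbol\nu_{x,t},\cdot\rangle$ with no leftover concentration measure beyond the single $\boldsymbol\gamma$ sitting on $\eta$ itself; this is precisely the role of hypotheses \eqref{S2:Hgr2}--\eqref{S2:Hgr5}, \eqref{S2:HR1}/\eqref{S2:HR2}, \eqref{S2:HG}, and \eqref{S2.2 A4}, and the argument must invoke them one term at a time, together with the truncation/monotone-convergence representation of $\langle\boldsymbol\nu_{x,t},\eta(\lambda,x,t)\rangle$ recorded before Definition~\ref{defdissi}. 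A secondary subtlety is justifying that the weak form \eqref{dfmv1} may be tested with the (merely Lipschitz, not $C^1$) functions $x\mapsto \bar G_i(\bar U(x,t),x,t)$ and that the resulting identity combines cleanly with \eqref{dfmv2}; this is handled by a standard mollification of $\bar U$ in the admissible test-function class, passing to the limit using $\bar U\in W^{1,\infty}$, exactly as in \cite{christoforou2016relative}. Modulo these points, which are routine but tedious, the proof is a direct Gr\"onwall estimate.
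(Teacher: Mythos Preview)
Your proposal is correct and follows essentially the same route as the paper's proof: define the averaged relative entropy (with the concentration measure attached to the $\eta$-piece), reproduce the integrated version of \eqref{Sec3:relentropyhyp} by testing \eqref{dfmv1} with $\xi(\tau)\bar G(\bar U,x,\tau)$ and combining with \eqref{dfmv2}, then invoke Lemma~\ref{S4:lemma} together with Hypotheses~B to dominate every right-hand side term by the relative entropy and close via Gr\"onwall. Your attention to the concentration bookkeeping and to the mollification needed to test with the Lipschitz function $\bar G$ matches the paper's treatment (the latter being handled by reference to~\cite{christoforou2016relative}); the only cosmetic difference is that you keep $\boldsymbol\gamma$ inside $\mathcal{E}$, whereas the paper drops it using $\boldsymbol\gamma\ge 0$ and $\tfrac{d\xi}{d\tau}\le 0$ before applying Gr\"onwall.
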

\begin{proof}
First we define the averaged quantities of the relative entropy
\begin{equation}\label{dfH}
\mathcal{H}(\boldsymbol{\nu},U,\bar{U};x,t)\doteq\langle \boldsymbol{\nu},\eta\rangle-\bar\eta-\bar G\cdot \big ( \langle\boldsymbol{\nu},A\rangle-\bar A \big )
\end{equation}
that satisfies
\begin{equation}
\mathcal{H}(\boldsymbol{\nu},U,\bar{U};x,t)=\int \eta(\lambda|\bar{U};x,t)d\boldsymbol{\nu}(\lambda)
\end{equation}
using definition~\eqref{Sec3:relative eta}.
We proceed  in the spirit of Subsection~\ref{sec3.1} but for the averaged relations. Hence, we first get 
\begin{align}\label{Sec4.1:rel step2}
\del_t \bar G \cdot (\langle\boldsymbol{\nu},A\rangle-\bar A)+\del_{\alpha} \bar G\cdot&(\langle\boldsymbol{\nu},f_\alpha\rangle-\bar f_\alpha)=\nabla\bar G\, \partial_\alpha \bar U\cdot  \langle\boldsymbol{\nu}, f_\alpha(\lambda|\bar U,x,\tau)\rangle
\nonumber\\
&
-\bar R\cdot \nabla\bar G\nabla \bar A^{-1} (\langle\boldsymbol{\nu},A\rangle-\bar A)\nonumber\\
&+\bar G_t \cdot(\langle\boldsymbol{\nu},A\rangle-\bar A)+\bar G_{x_\alpha}\cdot (\langle\boldsymbol{\nu},f_\alpha\rangle -\bar f_\alpha)
\end{align}
for $\alpha=1,\dots,d$ that corresponds to~\eqref{Sec3:rel step2}. Since $\bar{U}\in W^{1,\infty}(\overline{Q_T})$ is a strong solution of~\eqref{Sec3:Ueq}, then it verifies the strong versions of~\eqref{dfmv1}--\eqref{dfmv2}, i.e. the entropy relation~\eqref{dfmv2} as an identity.

Now choosing $\varphi(x,\tau)\doteq\xi(\tau) G(\bar U(x,\tau),x,\tau)$ in Definition~\ref{defdissi} and applying~\eqref{dfmv1} for both $U= \langle \boldsymbol{\nu}_{x,t} , \lambda \rangle$ and $\bar U$ in combination with~\eqref{Sec4.1:rel step2}, we arrive at
\begin{align}\label{intmvdiffhyp}
\iint\frac{d\xi}{d\tau} \bar G \cdot(&\langle\boldsymbol{\nu}, A\rangle-\bar A))+\xi(\tau)\Big[ \nabla \bar G\del_\alpha \bar{U} \cdot \langle\boldsymbol{\nu}, f_\alpha(\lambda|\bar U,x,\tau)\rangle
\nonumber
\\
&-\bar R\cdot \nabla\bar G\nabla \bar A^{-1} (\langle\boldsymbol{\nu},A\rangle-\bar A)
+\bar G_t \cdot(\langle\boldsymbol{\nu},A\rangle-\bar A)+\bar G_{x_\alpha}\cdot (\langle\boldsymbol{\nu},f_\alpha\rangle -\bar f_\alpha)\Big]dxd\tau\nonumber\\
&-\iint \xi(\tau) \bar G\cdot(\langle\boldsymbol{\nu},P\rangle-\bar P) dxd\tau+\int \xi(0)\bar G_0 (\langle\boldsymbol{\nu}_0,A\rangle-\bar A_0)dx=0\;.
\end{align}
%
Then we continue in the usual way to come up with the relative entropy quantity. In other words, we subtract from~\eqref{dfmv2} the entropy identity satisfied by the strong solution $\bar U$ and~\eqref{intmvdiffhyp}
to get
\begin{equation}\label{Hintineq}
\begin{aligned}
\iint\frac{d\xi}{d\tau} \, &\mathcal{H}(\boldsymbol{\nu},U,\bar{U};x,t) dxd\tau  + \iint \frac{d\xi}{d\tau} \gamma (dx d\tau) 
\\
&- \iint \xi(\tau) \Big[ \nabla \bar G\del_\alpha\bar{U} \cdot \langle\boldsymbol{\nu}, f_\alpha(\lambda|\bar U,x,\tau)\rangle
-\bar R\cdot \nabla\bar G\nabla \bar A^{-1} (\langle\boldsymbol{\nu},A\rangle-\bar A)\Big]dxd\tau 
\\
&
-\iint \xi(\tau)\Big[\bar G_t \cdot(\langle\boldsymbol{\nu},A\rangle-\bar A)+\bar G_{x_\alpha}\cdot (\langle\boldsymbol{\nu},f_\alpha\rangle -\bar f_\alpha)\Big]dxd\tau\\
&+\iint \xi(\tau) \bar G\cdot(\langle\boldsymbol{\nu},P\rangle-\bar P) dxd\tau-\iint \xi(\tau) (\langle\boldsymbol{\nu},Z\rangle-\bar Z) dxd\tau\\
&+\int\xi(0)\left[(\langle\boldsymbol{\nu}_0,\eta\rangle-\bar\eta_0-\bar G(0)\cdot (\langle\boldsymbol{\nu}_0,A\rangle-\bar A_0))dx+\boldsymbol{\gamma}_0(dx)\right] \ge 0\,,
\end{aligned}
\end{equation}
for any $\xi \in C_c^1([0,T))$ with $\xi \ge 0$. This inequality can be rewritten as
\begin{equation}\label{S4.2:Hintineq}
\begin{aligned}
\iint\frac{d\xi}{d\tau} \, &\mathcal{H}(\boldsymbol{\nu},U,\bar{U};x,t) dxd\tau  + \iint \frac{d\xi}{d\tau} \gamma (dx d\tau) 
-\iint\xi(\tau) (\langle\boldsymbol{\nu},G\rangle-\bar G )\cdot (\langle\boldsymbol{\nu},R\rangle- \bar R)dxd\tau\\
&\ge \iint \xi(\tau) \Big[ \nabla \bar G\del_\alpha\bar{U} \cdot \langle\boldsymbol{\nu}, f_\alpha(\lambda|\bar U,x,\tau)\rangle
+\bar R\cdot \langle\boldsymbol{\nu}, G(\lambda|\bar U,x,\tau)\rangle
\Big]dxd\tau 
\\
&
+\iint \xi(\tau)\Big[\bar G \cdot(\langle\boldsymbol{\nu},A_t\rangle-\bar A_t+ \langle\boldsymbol{\nu},f_{\alpha,x_\alpha}\rangle -\bar f_{\alpha,x_\alpha})\Big]dxd\tau\\
&
+\iint \xi(\tau)\Big[\bar G_t \cdot(\langle\boldsymbol{\nu},A\rangle-\bar A)+\bar G_{x_\alpha}\cdot (\langle\boldsymbol{\nu},f_\alpha\rangle -\bar f_\alpha)\Big]dxd\tau\\
&-\iint \xi(\tau) \Big[\langle\boldsymbol{\nu},\eta_t\rangle -\bar\eta_t+ \langle\boldsymbol{\nu},q_{\alpha,x_\alpha}\rangle-\bar q_{\alpha,x_\alpha}\Big] dxd\tau\\
&-\int\xi(0)\left[(\langle\boldsymbol{\nu}_0,\eta\rangle-\bar\eta_0-\bar G(0)\cdot (\langle\boldsymbol{\nu}_0,A\rangle-\bar A_0))dx+\boldsymbol{\gamma}_0(dx)\right] 
\end{aligned}
\end{equation}
and it corresponds to the weak version of the relative entropy inequality~\eqref{Sec3:relentropyhyp}.

Next, let $K\subset\mathbb{R}^n$ be a compact set containing the values of the strong solution $\bar U(x,t)$ for $(x,t)\in Q_T$. Then $K$ is contained in a ball $B_M\subset\mathbb{R}^n$ for some radius $M$.

We apply \eqref{S4.2:Hintineq} to a sequence of smooth, monotone nonincreasing functions $\xi_n \ge 0$ that
approximate the Lipschitz function
\begin{equation}\label{S2.2.2xi}
\xi(\tau)\doteq\left\{
\begin{array}{lll}
1 & \text{if} & 0\le \tau<t\\
\frac{t-\tau}{\eps}+1 & \text{if} & t\le \tau<t+\eps\\
0 & \text{if} & \tau\ge t+\eps
\end{array}
\right. \; .
\end{equation}
Passing in~\eqref{S4.2:Hintineq} first to the limit $n \to \infty$ and then to $\eps\to0+$, we arrive at
\begin{equation}\label{Hintineq2}
\begin{aligned}
\int \mathcal{H}(\boldsymbol{\nu},U,&\bar{U};x,t)\,dx\le C\int_0^t \int\max_{\alpha}|\langle\boldsymbol{\nu}, f_\alpha(\lambda|\bar U,x,\tau)\rangle|+|\langle\boldsymbol{\nu}, G(\lambda|\bar U,x,\tau)\rangle|dx\,d\tau\\
&+C\int_0^t \int| (\langle\boldsymbol{\nu},G\rangle-\bar G )\cdot (\langle\boldsymbol{\nu},R\rangle- \bar R)| dx\,d\tau \\
&
+C\int_0^t\int\Big|\langle\boldsymbol{\nu},A_t\rangle-\bar A_t+ \langle\boldsymbol{\nu},f_{\alpha,x_\alpha}\rangle -\bar f_{\alpha,x_\alpha})\Big|dxd\tau\\
&
+C\int_0^t\int \Big[|\langle\boldsymbol{\nu},A\rangle-\bar A|+|\langle\boldsymbol{\nu},f_\alpha\rangle -\bar f_\alpha|\Big]dxd\tau\\
&+C\int_0^t\int  \Big|\langle\boldsymbol{\nu},\eta_t\rangle -\bar\eta_t+ \langle\boldsymbol{\nu},q_{\alpha,x_\alpha}\rangle-\bar q_{\alpha,x_\alpha}\Big| dxd\tau\\
&+\int \left[(\langle\boldsymbol{\nu}_0,\eta\rangle-\bar\eta_0-\bar G(0)\cdot (\langle\boldsymbol{\nu}_0,A\rangle-\bar A_0))dx+\boldsymbol{\gamma}_0(dx)\right]\\
\end{aligned}
\end{equation}
for $t\in(0,T)$ employing hypotheses ~\eqref{S2:HB} and \eqref{S2:HBxt}. We observe that $C$ is a positive constant depending possibly on $(M,|\nabla\bar U|, T)$ through the bounds in~\eqref{S2:HB} and \eqref{S2:HBxt}. We also have used that
$\gamma \ge 0$.

Now, by hypotheses~\eqref{S2:Hgr2}, \eqref{S2:HR1} and ~\eqref{S2:HG}, we can apply
Lemma \ref{S4:lemma}. 
By~\eqref{lemform2}--\eqref{lemform4}, we have
\begin{equation}
\begin{aligned}
|  \langle \boldsymbol{\nu} , f_\alpha (\lambda | \bar U;x,t ) \rangle|
 &\le  c_3 \langle \boldsymbol{\nu} , \eta (\lambda | \bar U;x,t ) \rangle 
 = c_3 \mathcal{H} ( \boldsymbol{\nu},U,\bar{U};x,t)\;,
\end{aligned}
\end{equation}
\begin{equation}
\begin{aligned}
|\langle\boldsymbol{\nu}, G(\lambda|\bar U,x,\tau)\rangle|
 &\le  c_3 \langle \boldsymbol{\nu} , \eta (\lambda | \bar U;x,t ) \rangle 
 = c_3 \mathcal{H} ( \boldsymbol{\nu},U,\bar{U};x,t)\;,
\end{aligned}
\end{equation}
\begin{equation}
\begin{aligned}
| (\langle\boldsymbol{\nu},G\rangle-\bar G )\cdot (\langle\boldsymbol{\nu},R\rangle- \bar R)|
 &\le  c_3 \langle \boldsymbol{\nu} , \eta (\lambda | \bar U;x,t ) \rangle 
 = c_3 \mathcal{H} ( \boldsymbol{\nu},U,\bar{U};x,t)\;.
\end{aligned}
\end{equation}
Let us clarify that under Hypothesis~\eqref{S2:HR2}, the term $| (\langle\boldsymbol{\nu},G\rangle-\bar G )\cdot (\langle\boldsymbol{\nu},R\rangle- \bar R)|$ in~\eqref{Hintineq2} can be omitted. 
Combining~\eqref{S2:Hxt}--\eqref{S2:Hxt3} with~\eqref{S2:Hgr2}--\eqref{S2:Hgr3} and~\eqref{S2:Hgr4}--\eqref{S2:Hgr5}, we can also get
\begin{equation}
\begin{aligned}
\Big|\langle\boldsymbol{\nu},A_t\rangle-\bar A_t+ \langle\boldsymbol{\nu},f_{\alpha,x_\alpha}\rangle -\bar f_{\alpha,x_\alpha})\Big| &\le  c_3 \langle \boldsymbol{\nu} , \eta (\lambda | \bar U;x,t ) \rangle 
 = c_3 \mathcal{H} ( \boldsymbol{\nu},U,\bar{U};x,t)\;.
\end{aligned}
\end{equation}
\begin{equation}
\begin{aligned}
 \Big[|\langle\boldsymbol{\nu},A\rangle-\bar A|+|\langle\boldsymbol{\nu},f_\alpha\rangle -\bar f_\alpha|\Big]
 &\le  c_3 \langle \boldsymbol{\nu} , \eta (\lambda | \bar U;x,t ) \rangle 
 = c_3 \mathcal{H} ( \boldsymbol{\nu},U,\bar{U};x,t)\;.
\end{aligned}
\end{equation}
\begin{equation}
\begin{aligned} \Big|\langle\boldsymbol{\nu},\eta_t\rangle -\bar\eta_t+ \langle\boldsymbol{\nu},q_{\alpha,x_\alpha}\rangle-\bar q_{\alpha,x_\alpha}\Big|  &\le  c_3 \langle \boldsymbol{\nu} , \eta (\lambda | \bar U;x,t ) \rangle 
 = c_3 \mathcal{H} ( \boldsymbol{\nu},U,\bar{U};x,t)\;,
\end{aligned}
\end{equation}
for some positive constant that we call again $c_3$. Indeed these estimates follow immediately using that $\bar U\in B_M$, bound~\eqref{S4:lemform1} and following the ideas in the proof of Lemma~\ref{S4:lemma}.
Taking into account that there are no concentrations at the initial data, i.e. $\boldsymbol{\gamma}_0=0$,
estimate \eqref{Hintineq2} reduces to
\begin{equation}
\int \mathcal{H}(\boldsymbol{\nu},U,\bar{U};x,t)\,dx \le C'\int_0^t\int \mathcal{H}(\boldsymbol{\nu},U,\bar{U};x,\tau)\,+\iint\eta(\lambda|\bar U_0;x,0) d\boldsymbol{\nu}_0(\lambda)\,dx\,.
\end{equation}
Applying Gronwall's inequality, we obtain
\begin{equation}
\int\mathcal{H}(\boldsymbol{\nu},U,\bar{U};x,t)\,dx\le \left[ \iint \eta(\lambda|\bar U_0;x,0)d{\boldsymbol{\nu}}_0(\lambda)dx\right]\, e^{C' t}\;.
\end{equation}
The proof is complete.
\end{proof}

The uniqueness result follows immediately as a corollary in the class of entropy weak solutions. Now, a uniqueness result in the framework of  $L^\infty$ uniform bounds can also be achieved in the class of dissipative measure-valued solutions since in that setting no concentrations are present, i.e. $\boldsymbol{\gamma}=0$ and $\boldsymbol{\gamma}_0=0$ in the definition of the solution. The reader can combine steps from the above proof in conjunction with ~\cite[Theorem 2.2]{dst12} and \cite{bds11}. For completeness, let us state the result.

\begin{theorem}Assume that Hypotheses A and B hold true together with \eqref{S2.2 A4} and that the entropy $\eta$ is nonnegative. 
Let $\bar U\in W^{1,\infty}(Q_T)$ be a strong solution and let $(U,\boldsymbol{\nu})$ be  a dissipative measure valued solution to~\eqref{Sec3:Ueq} respectively. Assume that $\bar U$ and $U$ take value in a compact set $ K \subset\R^n$ for $(x,t)\in Q_T$ and that $\nu$ is also supported in $K$. Then there exist constants $c_1>0$ and $c_2>0$ such that
\begin{equation}
\iint|\lambda-\bar U(t)|^2d\boldsymbol{\nu}(\lambda)dx\le c_1 e^{c_2 t} \int|U_0-\bar U_0|^2dx ,
\end{equation}
for $t\in[0,T]$. Moreover, if the initial data agree $U_0=\bar U_0$, then $\boldsymbol{\nu}=\delta_{\bar U}$ and the dissipative measure valued solution is a strong solution, i.e. $U=\bar U$ almost everywhere.
\end{theorem}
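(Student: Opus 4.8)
The plan is to run the argument of the proof of Theorem~\ref{thmweakstrong} in the simplified situation of the present statement, where the absence of concentrations forces $\boldsymbol{\gamma}\equiv0$ and $\boldsymbol{\gamma}_0\equiv0$ throughout, and where the confinement of $\bar U$ and of the support of $\boldsymbol{\nu}$ to a fixed compact set $K\subset B_M$ makes every growth condition at infinity in Hypotheses \textbf{A} and \textbf{B} irrelevant. First I would reintroduce the averaged relative entropy
\[
\mathcal{H}(\boldsymbol{\nu},U,\bar U;x,t):=\langle\boldsymbol{\nu},\eta\rangle-\bar\eta-\bar G\cdot(\langle\boldsymbol{\nu},A\rangle-\bar A)=\int\eta(\lambda\,|\,\bar U;x,t)\,d\boldsymbol{\nu}(\lambda)
\]
as in~\eqref{dfH}, test the measure-valued balance law~\eqref{dfmv1} for $U=\langle\boldsymbol{\nu},\lambda\rangle$ and for $\bar U$ against $\varphi(x,\tau)=\xi(\tau)\,G(\bar U(x,\tau),x,\tau)$ with $\xi\in C^1_c([0,T))$, $\xi\ge0$, and subtract the outcome, together with the entropy identity verified by the strong solution $\bar U$, from~\eqref{dfmv2}. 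This produces the weak relative entropy inequality~\eqref{S4.2:Hintineq} with $\gamma\equiv0$ and $\boldsymbol{\gamma}_0\equiv0$; the algebra is the one already carried out in Subsection~\ref{sec3.1} and in the proof of Theorem~\ref{thmweakstrong}.

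Next I would absorb each relative term on the right of~\eqref{S4.2:Hintineq} into $\mathcal{H}$ by means of Lemma~\ref{S4:lemma}. Since the support of $\boldsymbol{\nu}$ and the range of $\bar U$ lie in $K\subset B_M$, the bounds~\eqref{lemform2} and~\eqref{lemform4} give $|\langle\boldsymbol{\nu},f_\alpha(\lambda|\bar U;x,t)\rangle|+|\langle\boldsymbol{\nu},G(\lambda|\bar U;x,t)\rangle|\le c_3\mathcal{H}$; the source-coupling term $(\langle\boldsymbol{\nu},G\rangle-\bar G)\cdot(\langle\boldsymbol{\nu},R\rangle-\bar R)$ is bounded by $c_3\mathcal{H}$ under~\eqref{S2:HR1}--\eqref{S2:HG} or simply discarded under the dissipativity condition~\eqref{S2:HR2}; and the inhomogeneity discrepancies $\langle\boldsymbol{\nu},A\rangle-\bar A$, $\langle\boldsymbol{\nu},f_\alpha\rangle-\bar f_\alpha$, $\langle\boldsymbol{\nu},A_t\rangle-\bar A_t+\langle\boldsymbol{\nu},f_{\alpha,x_\alpha}\rangle-\bar f_{\alpha,x_\alpha}$ and $\langle\boldsymbol{\nu},\eta_t\rangle-\bar\eta_t+\langle\boldsymbol{\nu},q_{\alpha,x_\alpha}\rangle-\bar q_{\alpha,x_\alpha}$ are controlled, using the quadratic estimates~\eqref{S2:Hxt}--\eqref{S2:Hxt3} together with the two-sided comparison~\eqref{equivnorm2} valid on $K$, by $c_3\langle\boldsymbol{\nu},|\lambda-\bar U|^2\rangle$, hence, after relabelling the constant, again by $c_3\mathcal{H}$. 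The coefficients $\bar G$, $\bar G_t$, $\bar G_{x_\alpha}$ and $\nabla\bar G\,\del_\alpha\bar U$ are uniformly bounded thanks to~\eqref{S2:HB}, \eqref{S2:HBxt} and $\bar U\in W^{1,\infty}(Q_T)$.

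I would then insert into~\eqref{S4.2:Hintineq} the nonincreasing smooth approximations $\xi_n\downarrow\xi$ of the Lipschitz cutoff~\eqref{S2.2.2xi} and pass to the limit $n\to\infty$ and then $\eps\to0+$. Since there are no initial concentrations, the counterpart of~\eqref{Hintineq2} becomes
\[
\int\mathcal{H}(\boldsymbol{\nu},U,\bar U;x,t)\,dx\le C'\int_0^t\!\int\mathcal{H}(\boldsymbol{\nu},U,\bar U;x,\tau)\,dx\,d\tau+\int\langle\boldsymbol{\nu}_0,\eta(\lambda\,|\,\bar U_0;x,0)\rangle\,dx
\]
for $t\in(0,T)$, with $C'$ depending only on $M$, $\|\nabla\bar U\|_{L^\infty}$ and $T$. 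Gronwall's inequality gives $\int\mathcal{H}(\boldsymbol{\nu},U,\bar U;x,t)\,dx\le e^{C't}\int\langle\boldsymbol{\nu}_0,\eta(\lambda|\bar U_0;x,0)\rangle\,dx$. Finally, the two-sided bound~\eqref{equivnorm2} on $K$ turns this into the asserted $L^2$ estimate: on the left $\mathcal{H}\ge\bar c_1'\langle\boldsymbol{\nu},|\lambda-\bar U|^2\rangle$, while on the right $\eta(\lambda|\bar U_0;x,0)\le C_1'|\lambda-\bar U_0|^2$, so the right-hand side is $\le C_1'\int|U_0-\bar U_0|^2\,dx$ since the initial Young measure carries no oscillations, $\boldsymbol{\nu}_0=\delta_{U_0}$; choosing $c_1=C_1'/\bar c_1'$ and $c_2=C'$ is the claim. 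If $U_0=\bar U_0$ a.e.\ the right-hand side vanishes, so $\iint|\lambda-\bar U(x,t)|^2\,d\boldsymbol{\nu}(\lambda)\,dx=0$ for every $t\in[0,T]$, whence $\boldsymbol{\nu}_{x,t}=\delta_{\bar U(x,t)}$ a.e.\ and $U=\langle\boldsymbol{\nu},\lambda\rangle=\bar U$ a.e.\ on $Q_T$.

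The delicate point, exactly as in the proof of Theorem~\ref{thmweakstrong} and the reason one invokes the machinery of~\cite[Thm.~2.2]{dst12} and~\cite{bds11}, is not the algebra but the analytic justification of the two limit passages $n\to\infty$ and $\eps\to0+$ in the averaged, integrated inequality, and the careful treatment of the initial trace carried by $\boldsymbol{\nu}_0$ rather than by a pointwise value of $U$; once these are settled, the absorption of all relative quantities into $\mathcal{H}$ is a direct specialization of Lemma~\ref{S4:lemma} and Subsection~\ref{sec3.1}, with the compactness of $K$ dispensing entirely with the growth conditions at infinity.
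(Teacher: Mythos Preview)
Your proposal is correct and follows precisely the route the paper itself indicates: the paper does not give a proof of this theorem but merely remarks that ``the reader can combine steps from the above proof in conjunction with~\cite[Theorem~2.2]{dst12} and~\cite{bds11},'' and that is exactly what you do---rerun the argument of Theorem~\ref{thmweakstrong} with $\boldsymbol{\gamma}\equiv0$, $\boldsymbol{\gamma}_0\equiv0$, and exploit the compactness of $K$ so that the two-sided bound~\eqref{equivnorm2} replaces all growth conditions at infinity. Your identification of the delicate analytic step (the limit passages and the initial trace) as the place where~\cite{dst12,bds11} are really needed is also on point.
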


%

\subsection{Stability of Solutions to Hyperbolic-Parabolic Systems of Conservation Laws with Inhomogeneity}\label{S4.2}
\,

In this subsection, we prove stability of viscous solutions to hyperbolic-parabolic system~\eqref{Sec2:Ueqpar} with respect to initial data having the additional hypotheses~\eqref{S2:HP1} and \eqref{S2:HBpar} from the set of Hypotheses C. Let us note that the growth conditions ~\eqref{S2:Hgr1}--\eqref{S2:Hgr3}, \eqref{S2:Hgr4}--\eqref{S2:Hgr5} are not assumed here.
 In this setting, we work with solutions, whose domain is $\R^d\times [0,T]$ and assume that they decay to zero as $|x| \to \infty$. Such a property is usually recovered by the existence theory to such systems and thus, it makes sense to consider it. Let us note that one could also work with periodic solutions on $\bar Q_T=\To^d\times[0,T]$.

The theorem on the $L^2$-stability of viscous solutions with respect to initial data is:
\begin{theorem}
\label{thmstability}
Fix $\e\in(0,\e_0)$ and suppose that $U(x,t)$, $\bar U(x,t)$ are smooth solutions of~\eqref{Sec2:Ueqpar}  defined on $\R^d\times[0,T]$  such that $U$, $\del_\alpha U$ and $\bar U$, $\del_\alpha \bar U$ decay 
to zero sufficiently fast as $|x|\to\infty$ with both having smooth initial data $U_0$, $\bar U_0$ in $ (L^\infty \cap L^2 )(\R^d)$.
Assume that  hypotheses $(\text{H}_1)$--\eqref{S2:H3}, \eqref{S2:HB}, \eqref{S2:Hxt}--\eqref{S2:Hxt3}, \eqref{S2:HBxt} hold true together with hypotheses~\eqref{S2:HP1} and~\eqref{S2:HBpar}. If both $U$ and $\bar U$ take values in a ball $B_M\subset\R^n$ of radius $M>0$, then there exists a constant 
$C>0$ independent of $\eps$,  such that
\begin{equation}\label{S2.3.1stability}
\|U(t)-\bar U(t)\|_{L^2(\R^d)}\le C  \left(\|U_0-\bar U_0\|_{L^2(\R^d)} \right)\;,
\end{equation}
for $t\in[0,T]$.
\end{theorem}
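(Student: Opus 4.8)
The strategy is to integrate the relative entropy inequality for the hyperbolic-parabolic system, established in \eqref{Sec3.2:relentropypar}, over $\R^d\times[0,t]$ and to estimate each term on the right-hand side by the integral of the relative entropy $\int \eta(U|\bar U;x,t)\,dx$, so that a Gronwall argument yields \eqref{S2.3.1stability}. First I would integrate \eqref{Sec3.2:relentropypar} in space: the flux divergence terms $\del_\alpha(q_\alpha(U|\bar U,x,t)-\e j_\alpha)$ vanish because $U$, $\del_\alpha U$ (and likewise $\bar U$, $\del_\alpha\bar U$) decay sufficiently fast as $|x|\to\infty$; the dissipation term $\e D\ge 0$ is discarded after moving it to the left, since $D$ is positive semidefinite by \eqref{S3.2:D}; and the term $(G-\bar G)\cdot(R-\bar R)$ contributes a quantity controlled by $C\int\eta(U|\bar U)\,dx$ using \eqref{S2:HBxt} together with the local quadratic behaviour of the relative entropy from \eqref{lemform3} in Lemma~\ref{S4:lemma}. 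Since no growth hypotheses are assumed here, all such estimates are carried out on the ball $B_M$ within which both solutions take values, so all constants depend only on $M$, $|\nabla\bar U|_{L^\infty}$ and the bounds in Hypotheses A, B$^B$ and C$^B$.

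The hyperbolic error terms on the right of \eqref{Sec3.2:relentropypar} are handled exactly as in the proof of Theorem~\ref{thmweakstrong}: the terms $\nabla\bar G\del_\alpha\bar U\cdot f_\alpha(U|\bar U;x,t)$ and $\bar R\cdot G(U|\bar U;x,t)$ are bounded by $C\int\eta(U|\bar U)\,dx$ using \eqref{lemform2}, \eqref{lemform4} (or, when the dissipativity alternative \eqref{S2:HR2} is invoked, the corresponding term is simply dropped); the terms $\eta_t+q_{\alpha,x_\alpha}-\bar\eta_t-\bar q_{\alpha,x_\alpha}$, $\bar G\cdot(A_t+f_{\alpha,x_\alpha}-\bar A_t-\bar f_{\alpha,x_\alpha})$, $\bar G_t\cdot(A-\bar A)$ and $\bar G_{x_\alpha}\cdot(f_\alpha-\bar f_\alpha)$ are each $O(|U-\bar U|^2)$ by \eqref{S2:Hxt}--\eqref{S2:Hxt3} combined with \eqref{S2:HBxt}, hence again bounded by $C\int\eta(U|\bar U)\,dx$ via \eqref{lemform3}. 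The new ingredient compared with the purely hyperbolic theorem is the $\e Q$ term, $Q=\sum_{i=1}^9 Q_i$; here the viscous dissipation $\e D$ retained on the left must absorb the parts of $Q$ that contain a full gradient difference $\del_\beta(U-\bar U)$, namely $Q_2$, $Q_5$ and $Q_7$. For these I would use Cauchy's inequality with weight: e.g. for $Q_7=-(G_{x_\alpha}-\bar G_{x_\alpha})\cdot B_{\alpha\beta}(\del_\beta U-\del_\beta\bar U)$ write $\e|Q_7|\le \tfrac{\lambda_1}{4}\e|\del_\beta(U-\bar U)|^2 + C\e|U-\bar U|^2$, using \eqref{S2:HBxt}, \eqref{S2:HBpar} and that $G_{x_\alpha}-\bar G_{x_\alpha}=O(|U-\bar U|)$; similarly for $Q_2$ and $Q_5$ exploiting $\nabla G-\nabla\bar G=O(|U-\bar U|)$ and smoothness of $\del_\alpha\bar U$. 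The coercivity \eqref{S2:HP1}, i.e. $D\ge\lambda_1\sum_\alpha|\del_\alpha(U-\bar U)|^2$, then shows the three quadratic-gradient pieces are absorbed into $\tfrac{1}{2}\e D$, leaving a net error $\le C\e\int|U-\bar U|^2\,dx\le C'\e\int\eta(U|\bar U)\,dx$. The remaining terms $Q_1,Q_3,Q_4,Q_6,Q_8,Q_9$ contain no gradient of $U-\bar U$: each is a product of a quantity that is $O(|U-\bar U|)$ or $O(|U-\bar U|^2)$ (by the expansions \eqref{defin1}--\eqref{defin3} and the mean-value estimates for $\nabla G$, $G_{x_\alpha}$, $B_{\alpha\beta}$) with bounded factors involving $\del_\beta\bar U$, and so is directly bounded by $C\e\int\eta(U|\bar U)\,dx$ using \eqref{lemform3}. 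Note the constant $C$ absorbing these must be taken independent of $\e$, which is legitimate since $\e<\e_0$ and every estimate carries at most one factor of $\e$.

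Assembling the above, and using the lower bound $\eta(U|\bar U;x,t)\ge c_1'|U-\bar U|^2$ on $B_M$ from \eqref{lemform3}, I obtain, after discarding the nonnegative leftover $\tfrac12\e\int_0^t\!\!\int D$,
\begin{equation}
\int \eta(U|\bar U;x,t)\,dx \le \int \eta(U_0|\bar U_0;x,0)\,dx + C\int_0^t\!\!\int \eta(U|\bar U;x,\tau)\,dx\,d\tau ,
\end{equation}
with $C$ independent of $\e$. Gronwall's inequality then gives $\int\eta(U|\bar U;x,t)\,dx\le e^{Ct}\int\eta(U_0|\bar U_0;x,0)\,dx$, and invoking once more the two-sided comparison between $\eta(U|\bar U)$ and $|U-\bar U|^2$ on $B_M$ (lower bound $c_1'|U-\bar U|^2$ on the left at time $t$, upper bound $C_1'|U_0-\bar U_0|^2$ on the right at time $0$, both from \eqref{equivnorm2} in the proof of Lemma~\ref{S4:lemma}) yields $\|U(t)-\bar U(t)\|_{L^2(\R^d)}^2\le C\,e^{Ct}\|U_0-\bar U_0\|_{L^2(\R^d)}^2$, which is \eqref{S2.3.1stability}. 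The main obstacle is the bookkeeping for the $\e Q$ term: one must verify that exactly the gradient-carrying pieces $Q_2,Q_5,Q_7$ are absorbable by $\e D$ via \eqref{S2:HP1} (with the smallness coming from the $O(|U-\bar U|)$ prefactors, not from $\e$), while the $\e$-uniformity of the final constant must be tracked carefully; everything else is a routine relative-entropy/Gronwall argument parallel to Theorem~\ref{thmweakstrong}.
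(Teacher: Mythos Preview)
Your approach is essentially the same as the paper's: integrate the relative entropy identity \eqref{Sec3.2:relentropypar}, drop the flux divergence by decay, use the coercivity \eqref{S2:HP1} on $D$, bound all right-hand terms by $C\int|U-\bar U|^2\,dx$ via the quadratic behaviour of the relative quantities on $B_M$, and apply Gronwall together with the two-sided equivalence \eqref{equivnorm2}.

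There is one concrete slip in the bookkeeping of the $Q_i$. You place $Q_4$ in the group ``containing no gradient of $U-\bar U$'', but in fact
\[
Q_4 = - \nabla G\,(\del_\alpha U - \del_\alpha \bar U)\cdot (B_{\alpha\beta}-\bar B_{\alpha\beta})\,\del_\beta\bar U
\]
carries the factor $\del_\alpha(U-\bar U)$ and therefore cannot be bounded by $C\int|U-\bar U|^2\,dx$ alone. The paper groups the terms as $Q_2+Q_4+Q_5+Q_7$ (those requiring absorption by the dissipation via Cauchy's inequality) and $Q_1+Q_3+Q_6+Q_8+Q_9$ (those bounded directly by $C\|U-\bar U\|^2$); see \eqref{S2.3.1Q5}. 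Once you move $Q_4$ to the gradient-carrying group and treat it exactly as you treat $Q_2,Q_5,Q_7$ (using $B_{\alpha\beta}-\bar B_{\alpha\beta}=O(|U-\bar U|)$ from \eqref{S2:HBpar}), your argument goes through unchanged.

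A minor stylistic point: the references to \eqref{lemform2}--\eqref{lemform4} are slightly misleading here because Lemma~\ref{S4:lemma} assumes the growth hypotheses \eqref{S2:Hgr1}--\eqref{S2:Hgr3}, \eqref{S2:HR1}, \eqref{S2:HG}, which are \emph{not} assumed in Theorem~\ref{thmstability}. You correctly note that everything is done on $B_M$, and indeed the paper bypasses Lemma~\ref{S4:lemma} entirely in this proof, arguing directly that the relative quantities are quadratic in $|U-\bar U|$ by Taylor expansion and the boundedness hypotheses \eqref{S2:HB}, \eqref{S2:HBxt}, \eqref{S2:HBpar}.
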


\begin{proof}
By integrating~\eqref{Sec3.2:relentropypar} over $\R^d\times[0,T]$  and using~\eqref{S3.2:D} and hypothesis~\eqref{S2:HP1}, we obtain
\begin{align}\label{Sec4.2:relentropypar} 
\int_{\R^d}&\eta(U(t)|\bar U(t);x,t)\,dx+\eps\lambda_1\int_0^t\int_{\R^d}|\nabla U(s)-\nabla\bar U(s)|^2\,dx\,ds\nonumber\\
\le & \int_{\R^d}\eta(U_0|\bar U_0;x,0)\,dx-\int_0^t\int_{\R^d}\nabla \bar G\del_\alpha \bar U\cdot  f_\alpha(U|\bar U;x,s)+\bar R \cdot G(U|\bar U;x,s) dxds\nonumber\\
&-\int_0^t\int_{\R^d}(G-\bar G)\cdot (R -\bar R) dxds +\int_0^t\int_{\R^d} \eta_t-\bar \eta_t+q_{\alpha,x_\alpha}-\bar q_{\alpha,x_\alpha}dxds  \nonumber\\
& -\int_0^t\int_{\R^d}\bar G\cdot(A_t-\bar A_t+f_{\alpha,x_\alpha}-\bar f_{\alpha,x_\alpha})dxds\nonumber\\
& -\int_0^t\int_{\R^d}\bar G_t \cdot(A-\bar A) +\bar G_{x_\alpha}\cdot (f_\alpha-\bar f_\alpha)dxds+\e\int_0^t\int_{\R^d} Q(x,s)dxds\;.
\end{align}

Since both $U$ and $\bar U$ take values in $B_M$, the terms in~\eqref{Sec3:relative G},~\eqref{Sec3:relative f},~\eqref{defin1}--\eqref{defin3} are all quadratic in $|U-\bar U|$. Indeed this follows by employing \eqref{S2:HBpar}. Hence, using also the boundedness conditions~\eqref{S2:HB}, ~\eqref{S2:HBxt}, this yields a constant $C_1>0$ such that
\begin{equation}\label{S4.2bd1}
\left|\int_{\R^d} \nabla \bar G\del_\alpha \bar U\cdot  f_\alpha(U|\bar U;x,s)+\bar R \cdot G(U|\bar U;x,s) \,dx  \right|
\le C_1 \|U(s)-\bar U(s)\|^2,
\end{equation}
\be
\left| \int_{\R^d}(G-\bar G)\cdot (R -\bar R) dxds \right|\le C_1 \|U(s)-\bar U(s)\|^2,
\ee
\begin{equation}
\left| \int_{\R^d}  Q_1(s)+Q_3(s)+Q_6(s)+Q_8+Q_9\, dx         \right|\le C_1\|U(s)-\bar U(s)\|^2,
\end{equation}
\begin{equation}\label{S2.3.1Q5}
\left| \int_{\R^d} Q_2(s)+Q_4(s)+Q_5(s)+Q_7 dx         \right|\le C_1\|U(s)-\bar U(s)\|_{L^2(\R^d)}^2+\lambda_1\| \nabla U(s)-\nabla\bar U(s)\|^2\,,
\end{equation}
for $s\in[0,t]$. Here, $C_1$ is a universal constant depending on the radius $M$, $\lambda_1$, the derivatives of $\bar U$ and on the uniform bounds assumed in~\eqref{S2:HB}, ~\eqref{S2:HBxt} and~\eqref{S2:HBpar} and $\|\cdot\|$ is the norm in ${L^2(\R^d)}$.
Employing~\eqref{S2:Hxt}--\eqref{S2:Hxt3} and bounds~\eqref{S4.2bd1}--\eqref{S2.3.1Q5} in~\eqref{Sec4.2:relentropypar}, we get
\begin{align}\label{Sec4.2:relentropypar2} 
\int_{\R^d}\eta(U(t)|\bar U(t);x,t)\,dx\le&  \int_{\R^d}\eta(U_0|\bar U_0;x,0)\,dx+2C(1+\e)\int_0^t\|U(s)-\bar U(s)\|^2ds\,.
\end{align}
Now, hypotheses (H$_1$) and~\eqref{S2:H3} (with the constant $\mu$ in \eqref{S2:H3} possibly depending on $M$ since $U,\bar U\in B_M$) allow us to write the relation
\begin{equation}\label{S2.3.1L2}
\frac{1}{C'} \|U(t)-\bar U(t)\|^2  \le \int_{\R^d}\eta(U(t)|\bar U(t);x,t)\,dx \le C'\|U(t)-\bar U(t)\|^2
\end{equation}
for some constant $C'>0$ and this expresses the equivalence of the relative entropy with the $L^2$ norm. Hence, combining~\eqref{Sec4.2:relentropypar2} and \eqref{S2.3.1L2} with Gronwall's lemma for $0<\eps<\eps_0$, we arrive at
\begin{equation}
\|U(t)-\bar U(t)\|_{L^2(\R^d)} ^2\le C_3 e^{C_2(1+\e) t}\left(\|U_0-\bar U_0\|_{L^2(\R^d)}^2\right)
\end{equation}
for $0<\eps<\eps_0$ and $C_2$, $C_3$ positive constants independent of $\e$.  Then~\eqref{S2.3.1stability} follows immediately and the proof is complete. Note that $C$ in~\eqref{S2.3.1stability} depends on $(T,\lambda_1,\mu,M,\nabla\bar U,\e_0)$. 
\end{proof}
%
%
%
%

\subsection{Convergence of viscous approximations to the hyperbolic limit}\label{S4.3}

Now, we consider a sequence of smooth solutions $\{U^\e\}$ to the hyperbolic-parabolic system~\eqref{Sec2:Ueqpar} with domain $\mathbb{R}^d \times [0, T)$ and satisfying the entropy condition~\eqref{hypparaen}. Assume that $\bar U$ is a smooth solution to the hyperbolic system~\eqref{Sec3:Ueq} defined on $ \mathbb{R}^d \times [0, T^*)$ with $T^*\le\infty$ satisfying~\eqref{Sec3:etaeq}. The aim is to prove the convergence $U^\e\to \bar U$ on $\mathbb{R}^d \times [0,T]$, for $T < T^*$ as the viscosity $\e$ tends to zero. It should be noted that here we assume hypothesis~\eqref{S2:HP2} instead of~\eqref{S2:HP1} because after performing the relative entropy calculation (see below) the term in the left-hand side of~\eqref{S2:HP2} arises and can control remaining terms in the relative entropy identity. This condition was motivated by the analysis in Dafermos~\cite[Ch IV]{MR3468916} and it is related to Kawashima condition~\cite{Kawashima84}. See also Serre~\cite{S}. Again, here ~\eqref{S2:Hgr1}--\eqref{S2:Hgr3}, \eqref{S2:Hgr4}--\eqref{S2:Hgr5} are not assumed.

Performing similar computations as in Subsection~\ref{S3.2}, we obtain the following relative entropy identity associated with the viscous approximations $U^\e$ and the hyperbolic solution $\bar U$:
\begin{align}\label{Sec4.3:relentropypar} 
\del_t(\eta(U^\e|\bar U&;x,t))+\del_\alpha(q_\alpha(U^\e|\bar U,x,t)-\e (G^\e-\bar G)\cdot B_{\alpha\beta}^\e\del_\beta U^\e)+(G^\e-\bar G)\cdot (R^\e -\bar R)  \nonumber\\
+& \e\nabla G^\e\del_\alpha U^\e\cdot B_{\alpha\beta}^\e\del_\beta U^\e=  -\nabla \bar G\del_\alpha \bar U\cdot  f_\alpha(U^\e|\bar U;x,t)
-\bar R \cdot G(U^\e|\bar U;x,t) 
\nonumber\\
& + \eta_t^\e+q_{\alpha,x_\alpha}^\e-\bar \eta_t-\bar q_{\alpha,x_\alpha} -\bar G\cdot(A_t^\e+f_{\alpha,x_\alpha}^\e-\bar A_t-\bar f_{\alpha,x_\alpha}) \nonumber\\
&  -\bar G_t \cdot(A^\e-\bar A) -\bar G_{x_\alpha}\cdot (f_\alpha^\e-\bar f_\alpha)\;\nonumber\\
&+\e \nabla\bar G\del_\alpha\bar U\cdot B_{\alpha\beta}^\e\del_\beta U^\e-\e(G^\e_{x_\alpha}-\bar G_{x_\alpha})\cdot B_{\alpha\beta}^\e\del_\beta U^\e\;.
\end{align}

Here, we prove the convergence of $U^\e$ to $\bar U$ on $\mathbb{R}^d \times [0,T]$ as $\e\to0+$ assuming that such smooth solution $\bar U$ exists. Let us point out that $G^\e$ denotes $G(U^\e,x,t)$, while $\bar G$ denotes $G(\bar U,x,t)$ as before and similarly for the other variables that appear in the computations.
\begin{theorem}
\label{thmconvnew} Let $\bar U(x,t)$ be a Lipschitz solution of the hyperbolic system~\eqref{Sec3:Ueq} defined on a  maximal interval of existence  $ \mathbb{R}^d \times [0, T^*)$ with $T^*\le\infty$ and initial data $U_0$. 
Fix $\e\in(0,\e_0)$ and consider the sequence  smooth solutions $U^\e(x,t)$ of~\eqref{Sec2:Ueqpar}  defined on $\R^d\times[0,T]$, $T<T^*$,  with initial data $U_0^\e$.
Assume that  hypotheses $(H_1)$--\eqref{S2:H3}, \eqref{S2:HB}, \eqref{S2:Hxt}--\eqref{S2:Hxt3}, \eqref{S2:HBxt} hold true together with hypotheses~\eqref{S2:HP2} and~\eqref{S2:HBpar}. If $U^\e$ and $\bar U$ take values in a ball $B_M\subset\R^n$ of radius $M>0$ and $U^\e$, $\del_\alpha U^\e$ and $\bar U$, $\del_\alpha \bar U$ decay to zero
sufficiently fast as $|x|\to\infty$, then there exists a constant 
$C>0$ independent of $\eps$,  such that
\begin{equation}
\label{Sconvergence}
\int_{\mathbb{R}^d} \eta (U^\eps | \bar U;x,t ) \, dx  \le C  \Big (\int_{\mathbb{R}^d} \eta (U_0^\eps | \bar U _0 ;x,0) \, dx  +  \eps  \int_0^T \int_{\mathbb{R}^d} | \nabla \bar U|^2 \, dx ds \Big ) \,,
\end{equation}
for $t\in[0,T]$.
In particular, if $ \int_{\mathbb{R}^d} \eta (U_0^\eps | \bar U _0 ;x,0 ) \, dx \to 0$
and $\nabla \bar U \in L^2 (\mathbb{R}^d\times[0,T])$ 
then 
\begin{equation}
\label{relenconv}
\sup_{ t \in (0, T)} \int_{\mathbb{R}^d} \eta (U^\eps | \bar U;x,t ) ) \, dx  \to 0 \qquad \mbox{as $\eps \to 0$}.
\end{equation}
\end{theorem}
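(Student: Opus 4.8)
The plan is to integrate the relative entropy identity~\eqref{Sec4.3:relentropypar} over $\mathbb{R}^d\times[0,t]$ for $t<T^*$, exploiting the decay of $U^\e$, $\nabla U^\e$, $\bar U$, $\nabla\bar U$ as $|x|\to\infty$ to discard the flux divergence terms. This leaves, on the left, the quantity $\int_{\mathbb{R}^d}\eta(U^\e|\bar U;x,t)\,dx$ plus the dissipation term $\e\int_0^t\int\nabla G^\e\del_\alpha U^\e\cdot B_{\alpha\beta}^\e\del_\beta U^\e\,dx\,ds$, which by hypothesis~\eqref{S2:HP2} dominates $\e\lambda_2\int_0^t\int\sum_\alpha|\sum_\beta B_{\alpha\beta}^\e\del_\beta U^\e|^2\,dx\,ds$; on the right, the initial relative entropy together with the error terms. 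The whole point is to show every error term is controlled either by $C\int_{\mathbb{R}^d}\eta(U^\e|\bar U;x,s)\,dx$ or is of order $\e\int\int|\nabla\bar U|^2$ modulo the dissipation term we just isolated, so that Gronwall closes the estimate.

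First I would treat the hyperbolic-type errors exactly as in Theorem~\ref{thmstability}: since $U^\e,\bar U\in B_M$, Lemma~\ref{S4:lemma} (specifically~\eqref{lemform2},~\eqref{lemform4}) plus the boundedness hypotheses~\eqref{S2:HB},~\eqref{S2:HBxt} give
$$
\Big|\int_{\mathbb{R}^d}\!\big(\nabla\bar G\del_\alpha\bar U\cdot f_\alpha(U^\e|\bar U;x,s)+\bar R\cdot G(U^\e|\bar U;x,s)+(G^\e-\bar G)\cdot(R^\e-\bar R)\big)dx\Big|\le C\!\int_{\mathbb{R}^d}\!\eta(U^\e|\bar U;x,s)\,dx,
$$
and the inhomogeneity terms $\bar G\cdot(A_t^\e+f_{\alpha,x_\alpha}^\e-\bar A_t-\bar f_{\alpha,x_\alpha})$, $\bar G_t\cdot(A^\e-\bar A)$, $\bar G_{x_\alpha}\cdot(f_\alpha^\e-\bar f_\alpha)$ and $\eta_t^\e+q_{\alpha,x_\alpha}^\e-\bar\eta_t-\bar q_{\alpha,x_\alpha}$ are all bounded by $C|U^\e-\bar U|^2$ via~\eqref{S2:Hxt}--\eqref{S2:Hxt3} and~\eqref{S2:HBxt}, hence by $C\eta(U^\e|\bar U;x,s)$ through the $L^2$-equivalence~\eqref{S2.3.1L2} that follows from $(\text{H}_1)$ and~\eqref{S2:H3}. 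The new feature relative to Theorem~\ref{thmstability} is the set of genuinely $\e$-order viscous error terms on the last line of~\eqref{Sec4.3:relentropypar}, namely $\e\nabla\bar G\del_\alpha\bar U\cdot B_{\alpha\beta}^\e\del_\beta U^\e$ and $\e(G^\e_{x_\alpha}-\bar G_{x_\alpha})\cdot B_{\alpha\beta}^\e\del_\beta U^\e$. For these I would use Cauchy--Schwarz/Young with a small parameter: write $\e|\nabla\bar G\del_\alpha\bar U\cdot B_{\alpha\beta}^\e\del_\beta U^\e|\le \frac{\e\lambda_2}{4}\sum_\alpha|\sum_\beta B_{\alpha\beta}^\e\del_\beta U^\e|^2+\frac{C\e}{\lambda_2}|\nabla\bar U|^2$, absorbing the first piece into the dissipation controlled by~\eqref{S2:HP2} and leaving exactly the advertised $\e\int\int|\nabla\bar U|^2$ term; the term with $G^\e_{x_\alpha}-\bar G_{x_\alpha}$ is handled the same way, noting $|G^\e_{x_\alpha}-\bar G_{x_\alpha}|\le C|U^\e-\bar U|\le C(\eta(U^\e|\bar U;x,s))^{1/2}$ by~\eqref{S2:HBxt} and~\eqref{S2.3.1L2}, so its cross term splits as $\frac{\e\lambda_2}{4}(\text{dissipation})+C\e\int\eta(U^\e|\bar U)$ — the last being an $O(\e)$ contribution absorbed into the Gronwall constant. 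This yields
$$
\int_{\mathbb{R}^d}\eta(U^\e|\bar U;x,t)\,dx\le \int_{\mathbb{R}^d}\eta(U_0^\e|\bar U_0;x,0)\,dx+C\int_0^t\int_{\mathbb{R}^d}\eta(U^\e|\bar U;x,s)\,dx\,ds+C\e\int_0^T\int_{\mathbb{R}^d}|\nabla\bar U|^2\,dx\,ds,
$$
and Gronwall's inequality gives~\eqref{Sconvergence}; then~\eqref{relenconv} is immediate from $\int\eta(U_0^\e|\bar U_0)\to0$ and $\nabla\bar U\in L^2$.

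The main obstacle I anticipate is keeping careful track of which error terms are truly $O(\e)$ versus $O(1)$ in the splittings — in particular making sure the constant $\lambda_2$ from~\eqref{S2:HP2} is genuinely available to absorb \emph{all} the viscous cross terms (there is only one dissipation budget $\e\lambda_2\sum_\alpha|\sum_\beta B_{\alpha\beta}^\e\del_\beta U^\e|^2$, so one must check the sum of the small-parameter pieces stays below it, which is fine since there are finitely many such terms and each gets weight $\e\lambda_2/4$ or smaller) — and verifying that~\eqref{S2:HP2} really controls the quadratic-in-$\nabla U^\e$ dissipation term $\e\nabla G^\e\del_\alpha U^\e\cdot B_{\alpha\beta}^\e\del_\beta U^\e$ with the \emph{degenerate} $B_{\alpha\beta}^\e$ allowed here, rather than the nondegenerate version used via~\eqref{S2:HP1} in Theorem~\ref{thmstability}. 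Once that bookkeeping is done, everything else is a routine repetition of the arguments already carried out for Theorems~\ref{thmweakstrong} and~\ref{thmstability}.
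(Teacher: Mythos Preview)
Your proposal is correct and follows essentially the same route as the paper: integrate~\eqref{Sec4.3:relentropypar}, bound all hyperbolic and inhomogeneity errors by $C\int\eta(U^\e|\bar U)$ using that $U^\e,\bar U\in B_M$, split the two $\eps$-viscous cross terms by Young's inequality so that one piece is absorbed into the dissipation via~\eqref{S2:HP2} and the other contributes either $\eps\int|\nabla\bar U|^2$ or $C\eps\int|U^\e-\bar U|^2$, and close with Gronwall. One minor point: your appeal to~\eqref{lemform4} of Lemma~\ref{S4:lemma} formally requires hypothesis~\eqref{S2:HG}, which is not among the assumptions of Theorem~\ref{thmconvnew}; however, since $U^\e,\bar U\in B_M$ the quadratic bound $|G(U^\e|\bar U;x,t)|\le C|U^\e-\bar U|^2$ follows directly from Taylor expansion and~\eqref{S2:HB},~\eqref{S2:HBpar}, which is exactly how the paper argues it.
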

\begin{proof} We integrate~\eqref{Sec4.3:relentropypar} over $\R^d\times[0,T]$ to get
\begin{align}\label{conveqnrelen-2}
\int_{\mathbb{R}^d} \eta (U^\eps& | \bar U;x,t ) \, dx +\eps\int_0^t \int_{\mathbb{R}^d}\big(\nabla G^\eps)^TB_{\alpha\,\beta}^\eps\del_\beta U^\eps\big)\cdot\del_\alpha U^\eps\,dx\nonumber\\
&+\int_0^t\int_{\mathbb{R}^d}(G^\e-\bar G)\cdot (R^\e -\bar R) dxds\nonumber\\
&\le  C  \int_0^t\int_{\mathbb{R}^d}   | f_\alpha (U^\eps | \bar U;x,s ) | \,+| G(U^\e|\bar U;x,s) |dx ds\nonumber\\
&+C\int_0^t\int_{\mathbb{R}^d}  |U^\e-\bar U|^2 dxds + \eps\int_0^t \int_{\mathbb{R}^d} |(\nabla \bar G\del_\alpha \bar U)\cdot\big( B_{\alpha\,\beta}^\e \del_\beta U^\eps\big)|\,dx\nonumber\\
&+\e\int_0^t\int |(G^\e_{x_\alpha}-\bar G_{x_\alpha})\cdot B_{\alpha\beta}^\e\del_\beta U^\e|dxds+\int_{\mathbb{R}^d} \eta (U^\eps_0 | \bar U_0;x,0 ) \, dx
\end{align}
%
using~\eqref{S2:Hxt}--\eqref{S2:Hxt3} and bounds~\eqref{S2:HB} and~\eqref{S2:HBxt}.
The goal is to estimate the $\e$-terms on the right-hand side by the dissipation term. We proceed as follows
\begin{align}\label{conveqnrelen-3}
 \int |(\nabla \bar G\del_\alpha \bar U)\cdot\big( B_{\alpha\,\beta}^\e &\del_\beta U^\eps\big)|\,dx\le\frac{1}{2\lambda_2}\int
  \sum_{i,\alpha}|\nabla\bar {G^i}\del_\alpha\bar U|^2 dx+\frac{\lambda_2}{2}\int\sum_{i,\alpha}|\sum_{j,\beta} B^{\e,i,j}\del_\beta U^{\e,j}|^2|dx\nonumber\\
\le& \frac{C_1}{2\lambda_2}\int| \nabla\bar U|^2 dx+\frac{1}{2}\int (\nabla G^\e\del_\alpha U^\e)\cdot(B_{\alpha\beta}^\e\del_\beta U^\e)dx
\end{align}
%
for some constant $C_1$ that may depend on $M$.
Also by \eqref{S2:HBpar} and using that $\nabla G_{x_\alpha}(\cdot,x,t)$ is bounded, we estimate the other $\e$-term
\begin{align}\label{conveqnrelen-4}
\int |(G^\e_{x_\alpha}-\bar G_{x_\alpha})\cdot B_{\alpha\beta}^\e&\del_\beta U^\e|dx \le\frac{1}{2\lambda_2}\int |G^\e_{x_\alpha}-\bar G_{x_\alpha}|^2 dx+\frac{\lambda_2}{2} \int
\sum_{i,\alpha}|\sum_{j,\beta} B^{\e,i,j}\del_\beta U^{\e,j}|^2|dx\nonumber\\
&\le \frac{C_2}{2\lambda_2}\int|U^\e-\bar U|^2 dx+\frac{1}{2}\int (\nabla G^\e\del_\alpha U^\e)\cdot(B_{\alpha\beta}^\e\del_\beta U^\e)dx\,,
\end{align}
for some constant $C_2>0$. 
Under hypotheses~\eqref{S2:H3} and (H$_1$) and using that $U^\e$ and $\bar U$ take values in $B_M$, the relative entropy is equivalent to the $L^2$ distance, (see~\eqref{S2.3.1L2} in the previous proof). Hence, we have that the relative flux $f_\alpha (U^\e|\bar U;x,t)$, the relative multiplier $G (U^\e|\bar U;x,t)$ are quadratic in $|U^\e-\bar U|$.
Combining~\eqref{conveqnrelen-3}--\eqref{conveqnrelen-4} with~\eqref{conveqnrelen-2}, we arrive at
%
\begin{align}\label{conveqnrelen-5}
\int_{\mathbb{R}^d} \eta (U^\eps | \bar U;x,t ) \, dx \le C_3(1+\e)\int_0^t
\int_{\mathbb{R}^d} \eta (U^\eps | \bar U;x,s ) \, dx ds
+\int_{\mathbb{R}^d} \eta (U^\eps_0 | \bar U_0;x,0 ) \, dx\nonumber\\
+\e \frac{C_1}{2\lambda_2}\int_0^t\int_{\mathbb{R}^d} | \nabla\bar U|^2 dxdt\,,
\end{align}
%
for some positive constant $C_3$.
Then by Gronwall's inequality, we conclude that
\begin{align}
\int_{\mathbb{R}^d} \eta (U^\eps | \bar U;x,t ) \, dx \le  e^{C_3(1+\e)t}(\int_{\mathbb{R}^d} \eta (U^\eps_0 | \bar U_0;x,0 ) \, dx+\e \int_0^T\int_{\mathbb{R}^d} | \nabla\bar U|^2 dxds)\,,
\end{align}
for $t\in[0,T]$ and $\e\in(0,\e_0)$.
%
The proof is complete.
\end{proof}

%
\section{Examples} \label{sec6}
In this section, we provide the reader with examples that belong to the class of hyperbolic inhomogeneous systems of balance laws~\eqref{Sec1:Ueq} and the hyperbolic-parabolic one~\eqref{Sec1:Ueqpar}. These examples are written in one-space dimension and references to the existing theory of weak solutions available for them are given. It would be interesting to examine these systems in several space dimensions in terms of their well-posedness in a weak framework. However, the available results are limited, especially regarding questions of existence of weak solutions. Here, we discuss the set of hypotheses A, B and C in the context of each example and conclude that Theorems~\ref{thmweakstrong},~\ref{thmstability},~\ref{thmconvnew} can apply.

{\bf Example 1}. The system of isentropic gas flow through a duct of (slowly) varying cross section $a(x)$ is
\begin{align}\label{sec6:gasduct}
\begin{split}
&\del_t(a(x)\rho)+\del_x(a(x)\rho v) =0  \\
&\del_t(a(x) \rho v)+\del_x(a(x)\rho v^2+a(x) p(\rho))=a'(x) p(\rho)\;.
\end{split}
\end{align}
This system reduces to the rectilinear isentropic flow of gas when $a(x)$ is constant. One can first verify that the system is strictly hyperbolic with nonzero characteristic speeds and characteristic families that are genuine nonlinear everywhere.  Under the hypothesis that $a(x)$ has sufficiently small total variation on $(-\infty,\infty)$, one can verify the requirements of the next theorem as established by T.-P. Liu and it is taken from Dafermos~\cite[Chap. 16]{MR3468916}.

\begin{theorem}[T.-P. Liu,~\cite{Liu}]
Consider the strictly hyperbolic system of balance laws
\be\label{sec6:U1eq}
\del_t U+\del_xf(U)+P(U,x)=0
\ee
with nonzero characteristic speeds, and characteristic families that are either genuinely nonlinear or linearly degenerate. Assume that for any $U$ in $\mathcal{O}$ and $x\in(-\infty,\infty)$,
\be\label{sec6:U1eqLiu}
|P(U,x)|\le f(x),\qquad |DP(U,x)|\le f(x)
\ee
where $f(x)$ satisfies \be
\int f(x)\,dx\le\omega
\ee
with $\omega$ sufficiently small. If the initial data $U_0$ have bounded variation, with $TV U_0=\delta$ sufficiently small, then there exists a global admissible $BV$ solution $U$ of~\eqref{sec6:U1eq},~\eqref{sec1:Udata}. For each fixed $t\in[0,\infty)$, $U(t)$ is a function of bounded variation on $(-\infty,\infty)$ and
\be
TV_{(-\infty,\infty)} U(t)\le C(\delta+\omega)\,.
\ee
\end{theorem}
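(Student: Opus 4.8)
The plan is to build the global $BV$ solution by the random choice (Glimm) method combined with operator splitting to absorb the source term, following Liu and the exposition in Dafermos~\cite[Chapter 16]{MR3468916}; a front--tracking construction would work equally well. Fix a spatial mesh $\Delta x$ and a time step $\Delta t$ obeying the Courant--Friedrichs--Lewy condition for the (bounded) characteristic speeds of $f$, together with an equidistributed random sequence. On each strip $[k\Delta t,(k+1)\Delta t)$ one first advances the homogeneous system $\del_t U+\del_x f(U)=0$ by the usual Glimm step (Riemann solvers at the mesh points, then sampling), and then advances the inhomogeneity by solving, for each $x$ held fixed, the ordinary differential equation $\dot U=-P(U,x)$ over time $\Delta t$. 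The bound $|P(U,x)|\le f(x)$ ensures the ODE flow remains in a slightly enlarged neighbourhood of $\mathcal O$ for $\Delta t$ small, so the scheme is well defined; that the approximate solutions actually take values in a fixed compact subset of $\mathcal O$ is part of the bootstrap below.

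The crux, and the main obstacle, is a bound on $TV\bigl(U_{\Delta x}(\cdot,t)\bigr)$ that is uniform in both the mesh and in $t$. Estimating directly the increase of $TV$ produced by the source step is not sufficient: such an estimate is not controlled by $\int f\,dx$ alone and would accumulate in time. The hypothesis that all characteristic speeds of $f$ are bounded away from zero is what saves the argument: the inhomogeneity is treated as a \emph{standing wave} of speed zero --- concretely, by adjoining the spatial variable (in the duct example, carrying $a(x)$ with $a_t=0$), or, for a general $P(U,x)$, by bookkeeping the waves emitted by the source during each step. Since the standing wave travels with speed $0$ while the genuinely nonlinear and linearly degenerate fields of $f$ travel with speeds $|\lambda_i|\ge c>0$, there is no resonance, and the interaction of any moving wave with the standing wave increases the outgoing wave strength over the incoming one only by a quantity quadratic in (strength of the moving wave)$\,\times\,$(strength of the standing wave). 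Decisively, the total strength of the standing waves is once and for all bounded by $C\int f\,dx\le C\omega$, a fixed budget that is not replenished as time advances. One then augments the Glimm interaction potential to $Q=Q_{\mathrm{int}}+Q_{\mathrm{st}}$, where $Q_{\mathrm{st}}$ measures pairings of moving waves with the standing wave and satisfies $Q_{\mathrm{st}}\le C\omega\,TV$, and shows that the functional $\mathcal G(t)=TV\bigl(U_{\Delta x}(\cdot,t)\bigr)+K\,Q(t)$ is non-increasing when $\delta+\omega$ is small enough. A standard bootstrap then gives $TV\bigl(U_{\Delta x}(\cdot,t)\bigr)\le\mathcal G(t)\le\mathcal G(0^+)\le C(\delta+\omega)$ for every $t\ge 0$, uniformly in $\Delta x$.

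With this uniform $BV$ bound, Helly's theorem furnishes a subsequence $U_{\Delta x}\to U$ in $L^1_{\mathrm{loc}}$ with $U(\cdot,t)\in BV(-\infty,\infty)$ and $TV_{(-\infty,\infty)}U(t)\le C(\delta+\omega)$, the last inequality surviving the limit by lower semicontinuity of the total variation. Consistency of the random choice scheme for the convective part, together with the fact that both the splitting error and the sampling residuals vanish for almost every realization of the random sequence (the classical Glimm--Liu argument), shows that $U$ is a weak solution of~\eqref{sec6:U1eq}--\eqref{sec1:Udata}; passing to the limit in the Lax/entropy admissibility inequalities valid for each approximate solution yields that $U$ is admissible. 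The two delicate points are therefore (i) the accounting of the standing--wave strengths, which rests essentially on $\int f\,dx\le\omega$ and on the absence of resonance, and (ii) checking that the fractional--step errors damage neither the $BV$ bound nor consistency; both are carried out in full in~\cite[Chapter 16]{MR3468916}, the random choice treatment of inhomogeneous systems originating in~\cite{DH,Liu}.
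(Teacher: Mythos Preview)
The paper does not prove this theorem: it is quoted as a result of T.-P.~Liu~\cite{Liu}, with the exposition attributed to Dafermos~\cite[Chap.~16]{MR3468916}, and no proof is given here. Your sketch is a faithful outline of the original argument --- Glimm scheme with operator splitting, the source treated as a standing wave of zero speed (with the nonzero characteristic speeds preventing resonance), an augmented interaction potential yielding the uniform $TV$ bound, and Helly compactness --- so there is nothing to compare against in this paper beyond noting that your references~\cite{Liu,DH,MR3468916} are exactly the ones the paper points to.
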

Here $\mathcal{O}$ stands for an open subset of $\mathbb{R}^n$ in which $U$ takes values.
Assuming that $a(x)$ remains away from $0$ and invertible, system~\eqref{sec6:gasduct} can be rewritten as
\begin{align}
\begin{split}
&\del_t \rho+\del_x(\rho v) +a^{-1}(x) a'(x)\rho v=0  \\
&\del_t( \rho v)+\del_x(\rho v^2+p(\rho))+a^{-1}(x) a'(x) \rho v^2=0
\end{split}
\end{align}
and this is now in form of system~\eqref{sec6:U1eq}. If $a(x)$ has sufficiently small total variation on $(-\infty,\infty)$, then the result of the above theorem applies. 

System~\eqref{sec6:U1eq} in several space dimensions becomes
\be\label{sec6:U1eq2}
\del_t U+\del_{\alpha}f(U)+P(U,x)=0
\ee
with $x\in\mathbb{R}^d$, and the entropy-entropy flux pair $(\eta(U),q_\alpha(U))$ satisfies 
\be
\del_t\eta(U)+\del_\alpha q_\alpha(U)+\nabla\eta\cdot P\le 0\;.
\ee
It is clear that $A=U$ and $f_\alpha=f_\alpha(U)$ are independent of $x$ and $t$. Hence, hypotheses $(H_1)$-\eqref{S2:H3} correspond to the existence of an entropy-entropy flux pair with a convex entropy $\eta=\eta(U)$. One can directly write from~\eqref{Sec3:relentropyhyp} the relative entropy identity associated with system~\eqref{sec6:U1eq2}
\begin{align}\label{Sec6:relentropyhyp}
\del_t(\eta(U|\bar U))+&\del_\alpha(q_\alpha(U|\bar U))+(\nabla\eta-\nabla\bar \eta)\cdot (P(U,x) -P(\bar U,x)) \le \nonumber\\
&\le  -\nabla^2 \bar \eta\del_\alpha \bar U\cdot  f_\alpha(U|\bar U)-P(\bar U,x) \cdot \nabla\eta(U|\bar U)\;.
\end{align}
Hypothesis~\eqref{S2:HB} holds true since all functions are independent of $x$ and $t$ and the bound $C$ is allowed to depend on $M$. Hypothesis~\eqref{S2:Hxt}--\eqref{S2:Hxt3}, \eqref{S2:Hgr4}--\eqref{S2:Hgr5} are not needed since these terms do not appear in~\eqref{Sec6:relentropyhyp}. Hypotheses~\eqref{S2:HBxt} reduces to $|P(U,x)|\le C$ for $U\in B_M$ which holds true for functions obeying condition~\eqref{sec6:U1eqLiu}. The remaining hypotheses~\eqref{S2:Hgr1}--\eqref{S2:Hgr3},~\eqref{S2:HR1}--\eqref{S2:HG} are the standard growth conditions imposed when proving weak-strong uniqueness in the relative entropy setting and actually, \eqref{S2:HR2} is a condition that corresponds to a dissipative source.

{\bf Example 2}. An important class of media in viscoelasticity in which the flux function depends also on the past history of the material and we say that the material has memory are materials with fading memory. Their constitutive relations can be expressed as systems of the form in one-space dimension
\be\label{S5:ex2:1}
\del_t u+\del_x \left(F(u(x,t))+\int_0^tk(t-\tau) H(u(x,\tau))d\tau\right)=0
\ee
with $F$ and $H$ smooth functions of $u\in\mathbb{R}^n$ and $k$ a smooth kernel integrable over $\mathbb{R}_+=[0,\infty)$. Such systems have the property that smooth initial data near equilibrium generate globally smooth solutions, Renardy, Hrusa, and Nohel~\cite{RHN} in contrast to the situation with elastic media in which classical solutions in general break down in finite time even when the initial data is small. 
While when the initial data are ``large", the destabilizing action of nonlinearity of the flux function  prevails over the damping, and solutions break down in a finite time; see Dafermos~\cite{Daf86} and Malek-Madani and Nohel~\cite{MN}. Special forms of~\eqref{S5:ex2:1}, such as the scalar equation
\be\label{S5:ex2:2}
\del_t u+\del_x f(u)+\int_{0}^t k(t-\tau)\del_x f(u(x,\tau)\,d\tau=0
\ee
or the system
\begin{align}\label{S5:ex2:3}
\begin{split}
&\del_t  u+\del_x v=0 \\
&\del_t v+\del_x p(u)+\int_{-\infty}^t k(t-\tau)\del_x q(u(x,\tau)\,d\tau=0
\end{split}
\end{align}
with a relaxation term, that both capture the damping effect of memory have been studied via the vanishing viscosity approximation (cf.~\cite{CC1},~\cite{CC2}).  The main motivation for the analysis in these works is that such models can be viewed as a linear Volterra equation, which was first observed by MacCamy~\cite{Mac} and later employed in Dafermos~\cite{Daf88} and Nohel, Rogers, and Tzavaras~\cite{NRT}.
Hence, under appropriate conditions on $k$ and applying the theory of resolvent kernel $r(\cdot)$ associated to $k(\cdot)$ and redistribution of damping, equation~\eqref{S5:ex2:2} is equivalent to
\be\label{S5:ex2:4}
\del_t u+\del_x f(u)+r(0) u= r(t) u- \int_0^t r'(t-\tau) u(\tau,x)\,d\tau
\ee
and system~\eqref{S5:ex2:3} is equivalent to
\begin{align}\label{S5:ex2:5}
\begin{split}
&\del_t  u+\del_x v +r(0) u=0 \\
&\del_t v+\del_x p(u)+r(0) v= r(t) v-\int_0^t r'(t-\tau) v(\tau,x)\,d\tau
\end{split}
\end{align}
The resolvent kernel $r$ is a nonincreasing positive kernel in $L^1$ and this yields a dissipative source in~\eqref{S5:ex2:4} and~\eqref{S5:ex2:5}.

Inhomogeneiry is thus present only in the source $P=P(u,t)$. Hence, hypotheses $(H_1)$-\eqref{S2:H3} correspond to the existence of an entropy-entropy flux pair with a convex entropy $\eta=\eta(U)$ and the relative entropy identity associated with~\eqref{S5:ex2:4} or~\eqref{S5:ex2:5}  (in d-dimensions) takes the form 
\begin{align}\label{Sec6:relentropyhyp2}
\del_t(\eta(U|\bar U))+&\del_\alpha(q_\alpha(U|\bar U))+(\nabla\eta-\nabla\bar \eta)\cdot (P(U,t) -P(\bar U,t)) \le \nonumber\\
&\le  -\nabla^2 \bar \eta\del_\alpha \bar U\cdot  f_\alpha(U|\bar U)-P(\bar U,t) \cdot \nabla\eta(U|\bar U)\;.
\end{align}
Hypotheses~\eqref{S2:HB} holds true immediately and hypotheses~\eqref{S2:Hxt}--\eqref{S2:Hxt3}, \eqref{S2:Hgr4}--\eqref{S2:Hgr5} are not needed. However, hypothesis~\eqref{S2:HBxt} becomes the condition $|P(u,t)|\le C$, which in general true when $C=C(T)$. Let us note that one should take advantage of the properties of the kernel $r$ that correspond to a dissipative source $P$ and the term on the left-hand side of~\eqref{Sec6:relentropyhyp2} could act as the dissipative term.

{\bf Example 3}. The last example considers self-similar viscous limits that are obtained as $\e\to 0$ in the sequence of viscous solution $u^\e$ to systems
%
\begin{align}\label{S5:ex3:1}
\begin{split}
&\del_t  u+\del_x f(u)=\e\,t\del_x(\widetilde{B}(u)\del_x u)
\end{split}
\end{align}
written here in one-space dimension ($x\in\mathbb{R}$). Solutions to~\eqref{S5:ex3:1} have been studied in the setting of the Cauchy problem and IBVP  with Riemann data (cf.~\cite{Daf2}, ~\cite{Tz1},~\cite{CS1}, \cite{CS2} and the references therin). Existence of $u^\e$ and convergence in the framework of bounded variation is established for a viscosity matrix $\widetilde{B}=I$. For more general viscosity matrices, not necessarily invertible, the characterization of boundary layers is provided and these remain unchanged if one considers the standard viscous limits $\e\del_x^2u$ instead of~\eqref{S5:ex3:1}. Moreover, the description of the Riemann solution to~\eqref{S5:ex3:1} using the center manifold techniques is analyzed in Dafermos~\cite{MR3468916}. Such systems are investigated in several-space dimensions in an effort to find a selection criterion for weak solutions in order to single out a unique one. This idea is discussed in the analysis by Giesselmann--Tzavaras~\cite{GT}. System~\eqref{S5:ex3:1} generalized in several-space dimensions takes the form
\begin{align}\label{S5:ex3:2}
\begin{split}
&\del_t  u+\del_\alpha f_\alpha(u)=\e\,t\del_{\alpha}(\widetilde{B}_{\alpha\beta}(u)\del_\beta u)
\end{split}
\end{align}
and the corresponding relative entropy identity becomes
\begin{align}\label{Sec5.3:relentropypar} 
\del_t(\eta(u|\bar u))+\del_\alpha(q_\alpha(u|\bar u)-\e j_\alpha) +\e D 
\le  -\nabla^2 \bar\eta \del_\alpha \bar U\cdot  f_\alpha(U|\bar U;x,t)
 +\e Q\;,
\end{align}
with
\begin{align}
j_\alpha:
&= t(\nabla\eta - \nabla\bar \eta) \cdot ( \widetilde{B}_{\alpha \beta} \del_\beta U - \bar {\widetilde{B}}_{\alpha \beta} \del_\beta \bar U ) + 
t\bar {\widetilde{B}}_{\alpha \beta}   \del_\beta \bar U \cdot \nabla\eta(U | \bar U;x,t)\nonumber
\end{align}
and $Q=\sum_{i=2}^6 Q_i$, given in~\eqref{definq2}--\eqref{definq6}, by replacing $B_{\alpha\beta}(U,x,t)$ by $t\cdot \widetilde{B}_{\alpha\beta}(u)$. Inhomogeneity is not present in the conserved part of~\eqref{S5:ex3:2}, hence hypotheses~\eqref{S2:HB},~\eqref{S2:Hxt}--\eqref{S2:Hxt3},~\eqref{S2:HBxt},~\eqref{S2:Hgr4}-\eqref{S2:Hgr5},~\eqref{S2:HR1}--\eqref{S2:HG} are not needed. Also, hypothesis \eqref{S2:HBpar} holds true with $C=C'T$, where $T$ is the final time of existence. Regarding hypotheses~\eqref{S2:HP1} or~\eqref{S2:HP2}, we see that $\lambda_i=\tilde{\lambda}_i\cdot t$ induces a singularity at time zero and the constants that bound the $\e$-terms in the proofs of Theorems~\ref{thmstability} and~\ref{thmconvnew} depend on $(\tilde{\lambda}_i t)^{-1}$. Let us only here make two comments:
(1) To treat the stability result of Section~\ref{S4.2}, , see that~\eqref{Sec4.2:relentropypar2} takes the form
\begin{align}\label{Sec5.3:relentropypar2} 
\|u(t)-\bar u(t)\|^2\le \|u_0-\bar u_0\|^2+2C(1+\frac{\e}{\tilde{\lambda}_1 t})\int_0^t\|u(s)-\bar u(s)\|^2ds
\end{align}
One could overcome such a difficulty by analyzing system~\eqref{S5:ex3:2} in its natural framework which is the self-similar coordinates $\xi_\alpha:=\frac{x_\alpha}{t}$. (2) To deal with the convergence result of Section~\ref{S4.3}, estimate~\eqref{conveqnrelen-5} takes the form
\begin{align}\label{S5.3:conveqnrelen-5}
\|u^\e(t)-\bar u(t)\|^2 \le C_2\int_0^t
\|u^\e(s)-\bar u(s)\|^2ds 
+\|u_0-\bar u_0\|^2
+\e \frac{C_1}{2t \lambda_2}\int_0^T\| \nabla\bar U\|^2dt\,,
\end{align}
and note that estimate~\eqref{conveqnrelen-4} is not needed since such a term is not present here. This means that  the factor $C_2$ of $\|u^\e(s)-\bar u(s)\|^2$ above  is not singular.
One could integrate over $(t_1,t)$, for fixed $t_1>0$, to avoid the origin, then pass to $\e\to0+$ and last take the limit $t_1\to0$. This analysis is not the scope of this article.

In a forthcoming paper, the author presents applications in the context of isometric immersion to the Gauss--Codazzi system and establishes the uniqueness result for smooth immersions within the class of corrugated immersions. Let us only mention here that this forthcoming work investigates the Gauss-Codazzi system that takes the form~\eqref{Sec1:Ueq} and various cases are exploited according to the given metric. Let us clarify that for the isometric immersion problem, the presence of inhomogeneity may varies depending on the given metric and appropriate change of variables. Therefore, this study is postponed in a forthcoming paper where extensive and detailed analysis is exploited. Existence of $C^{1,1}$ corrugated immersions using techniques from continuum physics was established for different cases and an exposition of the current state can be found in~\cite{CSEMAI}.

	\section*{Acknowledgments}
	The author was partially supported by the Internal grant SBLawsMechGeom \#21036 from University of Cyprus.

\end{document}